\theoremstyle{plain}
\newtheorem{thm}{Theorem}[section]
\newtheorem{prop}[thm]{Proposition}
\newtheorem{lemma}[thm]{Lemma}
\newtheorem{cor}[thm]{Corollary}
\theoremstyle{definition}
\newtheorem{defn}[thm]{Definition}
\newtheorem*{defn*}{Definition}
\newtheorem*{question*}{Question}
\newtheorem{example}[thm]{Example}
\newtheorem*{example*}{Example}
\newtheorem{rem}[thm]{Remark}
\newtheorem*{rem*}{Remark}
\newcommand{\field}[1]{\mathbb{#1}}
\newcommand{\N}{\field{N}}
\newcommand{\Z}{\field{Z}}
\newcommand{\ideal}[1]{\mathfrak{#1}}
\newcommand{\m}{\ideal{m}}
\newcommand{\n}{\ideal{n}}
\newcommand{\p}{\ideal{p}}
\newcommand{\func}[1]{\mathrm{#1} \,}
\newcommand{\Spec}{\func{Spec}}
\newcommand{\hgt}{\func{ht}}
\newcommand{\ra}{\rightarrow}
\DeclareMathOperator{\len}{ }
\DeclareMathOperator{\ann}{ann}
\newcommand{\be}{\begin{enumerate}}
\newcommand{\ee}{\end{enumerate}}
\newcommand{\li}
 {\leftfootline}
\renewcommand{\phi}{\varphi}
\DeclareMathOperator{\orc}{c}
\def\@settitle{\begin{center}%
  \baselineskip14\p@\relax
  \bfseries
  \uppercasenonmath\@title
  \@title
  \ifx\@subtitle\@empty\else
     \\[1ex]
     \@subtitle
  \fi
  \end{center}%
}
\def\subtitle#1{\gdef\@subtitle{#1}}
\def\@subtitle{}
\author{Neil Epstein}
\address{Department of Mathematical Sciences \\ George Mason University \\ Fairfax, VA  22030}
\email{nepstei2@gmu.edu}
\author{Jay Shapiro}
\address{Department of Mathematical Sciences \\ George Mason University \\ Fairfax, VA  22030}
\email{jshapiro@gmu.edu}
\title{The Ohm-Rush content function III}
\subjclass[2010]{Primary 13B02; Secondary 13A15, 13B35, 13B40, 13F05}
\keywords{commutative algebra, content algebras, Ohm-Rush, completion, extended ideals, faithfully flat, Dedekind domain}
\date{February 25, 2021}
\begin{document}
\begin{abstract}
One says that a ring homomorphism $R \rightarrow S$ is \emph{Ohm-Rush} if extension commutes with arbitrary intersection of ideals, or equivalently if for any element $f\in S$, there is a unique smallest ideal of $R$ whose extension to $S$ contains $f$, called the \emph{content} of $f$.  For Noetherian local rings, we analyze whether the completion map is Ohm-Rush.  We show that the answer is typically `yes' in dimension one, but `no' in higher dimension, and in any case it coincides with the content map having good algebraic properties. 
We then analyze the  question of when the Ohm-Rush property globalizes in faithfully flat modules and algebras over a 1-dimensional Noetherian domain, culminating both in a positive result and a counterexample.  Finally, we introduce a notion that we show is strictly between the Ohm-Rush property and the weak content algebra property.
\end{abstract}

\maketitle

\section{Introduction}
One of the most useful and important methods
 of constructing a new commutative ring from an existing one is via polynomial extension.   The polynomial algebra $ R[x]$ inherits many of the properties of $R$.   There are several important keys in the study of polynomial extensions and their relation to the base ring.  Such algebras are of course faithfully flat over $R$.  Moreover there is a natural map from elements of $R[x]$ to finitely generated ideals of $R$ via the well known content map, which sends an element $f$ to the ideal generated by the coefficients of $f$, denoted $c(f)$.

In \cite{OhmRu-content}, Ohm and Rush define a function $c$ from elements of an arbitrary $R$-algebra $S$ to the set of ideals of $R$.
For $S$ faithfully flat over $R$, they give criteria for when $c$ can serve as an appropriate generalization of the content function in polynomial extensions.
  Such extensions are called {\it content algebras}. 
  In a variation of this concept, Rush in \cite{Ru-content} examined what he called {\it weak content} algebras, whose properties are  easier to check then those of a content algebra.  Also essentially at the same time as Ohm and Rush, Eakin and Silver \cite{EakSi-almost} defined and examined some of the same properties as in \cite{OhmRu-content} and applied the results to locally polynomial rings.
More recently in a series of three papers \cite{nmeSh-OR, nmeSh-Gauss, nmeSh-OR2}, the authors further examined content and weak content algebras, as well as defined an intermediary notion (\emph{semicontent} algebras).  Nasehpour in \cite{Nas-ABconj} studied   content algebras that satisfied the additional property that $c(fg) = c(f)c(g)$, calling them \emph{Gaussian}.  That is, a Gaussian algebra is an Ohm-Rush algebra in which the content function is a homomorphism of multiplicative semigroups.

We introduce the basic definitions and terminologies in Section \ref{sec:basics} that will be used throughout.  We then give necessary and sufficient conditions (Theorem~\ref{thm:ORcompletion}) for the completion  of a local Noetherian ring $R$ to be content over $R$ (Gaussian even) in terms of the extension of ideals.   Combining this with the work of Hassler and Wiegand \cite{HaWi-extend}  on the extension of modules, we prove the main result of this section (Theorem~\ref{thm:dim1completion}) which characterizes when $\hat{R}$ is Gaussian over a 1-dimensional reduced local Noetherian ring $R$.   From this it follows (Corollary~\ref{cor:1ddomcompletion}) that if $R$ is an analytically irreducible Noetherian local integral domain of dimension one, $\hat{R}$ is Gaussian over $R$.   Moreover  Example~\ref{ex:dim2} shows that one cannot expect to extend this result to higher dimensions.

In Section \S\ref{sec:one} we characterize when an $R$-algebra is a content algebra, where $R$ is a Dedekind domain.  We also show that in many typical cases over a 1-dimensional base, the property of being an Ohm-Rush algebra  globalizes (Theorems~\ref{thm:DedOR} and ~\ref{thm:dim1ORglobal}).  However, the Ohm-Rush property does \emph{not} globalize in general, even for a faithfully flat (albeit non-Noetherian) algebra over $\mathbb{Z}$.  As far as we know, we give here the first known counterexample to globalization of the Ohm-Rush property (see Example~\ref{ex:X/p}).  We then apply our results in Example~\ref{ex:notfg} to show that a known locally polynomial algebra is in fact Gaussian.

In the final section \S\ref{sec:pc} we define and examine power-content algebras, a property that  lies strictly between Ohm-Rush algebras and weak content algebras.

\section{When is the $\m$-adic completion Ohm-Rush?}\label{sec:basics} 
In this section we examine conditions on a local Noetherian ring $R$ so that its $\m$-adic completion is a content (in fact Gaussian) algebra over $R$, but first
we begin with some of  the basic definitions that will be used throughout the paper.

\begin{defn}[See \cite{OhmRu-content}; current nomenclature from \cite{nmeSh-OR}]\label{def:orc}
Let $R$ be a ring, $M$ an $R$-module, and $f\in M$.  Then the (\emph{Ohm-Rush$)$ content} of $f$ is given by  \[
\orc(f) := \bigcap\{I \subseteq R \text{ ideal } \mid f \in IM\}.\footnote{In \cite{nmeSh-OR}, we use the symbol $\Omega$ for this function.}
\]
If $f\in \orc(f)M$ for all $f\in M$, we say that $M$ is an \emph{Ohm-Rush module}; if $M$ is moreover an $R$-algebra, we say that it is an \emph{Ohm-Rush algebra} over $R$.

When $f\in S$, we introduce the notation $L_f := \{I \subseteq R \text{ ideal } \mid f \in IM\}$.
\end{defn}

\begin{defn}\label{defs}
Let $R \rightarrow S$ be an Ohm-Rush algebra.  We say that it is  \begin{enumerate}
\item\label{def:content} a \emph{content algebra} \cite{OhmRu-content} if it is faithfully flat and for any $f, g \in S$, there is some $n\in \N$ with $\orc(f)^n \orc(g) = \orc(f)^{n-1} \orc(fg)$,
\item a \emph{Gaussian algebra} \cite{Nas-ABconj} if it is faithfully flat and for any $f, g \in S$, we have $\orc(fg) = \orc(f) \orc(g)$.  That is, one may choose $n=1$ in (\ref{def:content}).
\end{enumerate}
\end{defn}

We recall some basic facts about the above properties. In an Ohm-Rush algebra $c(fg) \subseteq c(f)c(g)$ for all $f,g \in S$ \cite[Proposition 1.1(i)]{Ru-content}, while an Ohm-Rush algebra is flat if and only if $c(af)= ac(f)$ for all $a\in R$ and $f\in S$ \cite[Corollary 1.6]{OhmRu-content}.  It is well known that if an $R$-module is flat then the extension of ideals distributes over \emph{finite} intersection.  On the other hand, an $R$-module is Ohm-Rush if and only if the extension of ideals distributes over \emph{arbitrary} intersections \cite[2.2]{EakSi-almost}. 
Also in a content algebra, prime ideals extend to prime ideals \cite[Theorem 1.2]{Ru-content}.  (In fact by that same result we know that prime ideals extend to prime ideals in a faithfully flat weak content algebra - see \S\ref{sec:pc} for a definition.)

Common examples of content algebras include polynomial algebras \cite{Ded-DM, Mer-DM}, monoid algebras where the monoid is torsion-free and cancellative \cite{No-content}, and power series algebras when the base ring is Noetherian \cite{nmeSh-DMpower}.  On the other hand, the Gaussian algebra property is much stronger, and for instance only holds for polynomial algebras over an integral domain when the domain is Pr\"ufer \cite{Gil-hilf, Pr-DM, Ts-Gauss}.  See also \cite{nmeSh-Gauss} for properties of ring elements $f\in S$ for which $\orc(f)\orc(g)=\orc(fg)$ for all $g\in S$.

Recall that a ring extension $R \subseteq S$ is called \emph{cyclically pure} if every ideal of $R$ is contracted from $S$.  That is, for every ideal $I$ of $R$, we have $IS \cap R = I$.  In particular, any faithfully flat extension is cyclically pure.

\begin{prop}\label{pr:pureprincipal}
Let $R \ra S$ be a cyclically pure (e.g. faithfully flat) ring homomorphism. Let $f\in S$.  Suppose there is some $r\in R$ such that $fS=rS$.  Then $c(f) = rR$, so $f \in c(f)S$.

If every $f\in S$ has this property, then $S$ is a Gaussian $R$-algebra. 
\end{prop}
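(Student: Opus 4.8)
The plan is to treat the two assertions in turn, the second resting on the first together with the flatness criteria recalled above. For the first assertion, I would begin by observing that $fS = rS$ gives $f \in rS = (rR)S$, so $rR \in L_f$ and hence $c(f) = \bigcap L_f \subseteq rR$. The reverse inclusion is where cyclic purity enters: if $I$ is any ideal of $R$ with $f \in IS$, then $rS = fS \subseteq IS$, so (the image of) $r$ lies in $IS \cap R = I$; thus $r$ belongs to every ideal in $L_f$, whence $rR \subseteq \bigcap L_f = c(f)$. Combining the two inclusions gives $c(f) = rR$, and $f \in rS = c(f)S$ is then immediate. This step is essentially a definition-chase; the one point to keep straight is that the equality $IS \cap R = I$ is exactly what licenses passing from $r \in IS$ to $r \in I$.

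For the second assertion, assume every $f \in S$ has the stated property. Then the first part gives $f \in c(f)S$ for all $f \in S$, so $S$ is an Ohm-Rush $R$-algebra. To obtain flatness I would verify the criterion $c(af) = a\,c(f)$ of \cite[Corollary 1.6]{OhmRu-content}: picking $r \in R$ with $fS = rS$, one has $(af)S = (ar)S$, so the first part gives $c(af) = (ar)R = a(rR) = a\,c(f)$. For faithful flatness, I would note that cyclic purity already forces $IS \ne S$ for every proper ideal $I$ of $R$ (otherwise $IS \cap R = R \ne I$); in particular $\m S \ne S$ for every maximal ideal $\m$ of $R$, and a flat algebra with this property is automatically faithfully flat. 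Finally, for Gaussianity, given $f, g \in S$ choose $r_1, r_2 \in R$ with $fS = r_1 S$ and $gS = r_2 S$; then $fgS = r_1(gS) = r_1 r_2 S$, so the first part yields $c(fg) = r_1 r_2 R = (r_1 R)(r_2 R) = c(f)\,c(g)$, as required.

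The only real obstacle is the faithful-flatness step, since the hypothesis supplies cyclic purity rather than faithful flatness outright; the resolution is the remark that cyclic purity by itself prevents a maximal ideal of $R$ from expanding to the unit ideal of $S$, so once flatness has been established, faithful flatness follows for free.
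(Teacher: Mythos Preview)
Your proof is correct and follows essentially the same route as the paper's own argument: both establish $c(f)=rR$ via the two inclusions using $f\in rS$ and cyclic purity, then derive Ohm-Rush, flatness via the criterion $c(af)=a\,c(f)$, faithful flatness from $\m S\cap R=\m$, and Gaussianity from $fgS=r_1r_2S$. The only difference is the order in which the flatness and Gaussian checks appear, which is immaterial.
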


\begin{proof}
Let $f\in S$ and $r\in R$ with $fS=rS$. Let $I$ be an ideal of $R$ such that $f\in IS$.  Then $r\in fS \cap R \subseteq IS \cap R = I$ by cyclic purity.  Since this holds for all such ideals $I$, we have $r \in c(f)$.  On the other hand, since $f \in rS$, we have $c(f) \subseteq rR$.  Hence $c(f)=rR$. Thus, $f\in rS = c(f)S$.

Now suppose that \emph{every} principal ideal of $S$ is extended from a principal ideal of $R$. Let $f,g \in S$.  There exist $a,b\in R$ such that $fS=aS$ and  $gS=bS$.  Moreover, we  have $fgS = abS$.  So by the above, $c(f)c(g) = (aR)(bR) = abR = c(fg)$.  

Since we have shown that the map is Ohm-Rush, we can check flatness by showing that $\orc(af) = a\orc(f)$ for all $a\in R$, $f\in S$, due to \cite[Corollary 1.6]{OhmRu-content}. So let $f\in S$ and $a,r\in R$ with $fS=rS$.  Then $afS = arS$, so $c(af) = arR = ac(f)$.  Finally, for any maximal ideal $\m$ of $R$, we have $\m S \cap R = \m$, whence $\m S \neq S$, finishing the proof of faithful flatness and the Gaussian property.
\end{proof}

In the case of the completion map, we have the following characterization:

\begin{thm}\label{thm:ORcompletion}
Let $(R,\m)$ be a Noetherian local ring, and let $(\hat{R},\n)$  be its $\m$-adic completion.  Then the following are  equivalent: \begin{enumerate}
\item\label{it:comOR} The map $R \ra \hat{R}$ is Ohm-Rush.
\item\label{it:comGauss} The map $R  \ra \hat{R}$ is Gaussian.
\item\label{it:comprin} For any $g\in \hat{R}$, there is some $r\in R$ such that $g\hat{R} = r\hat{R}$.  That is, every principal ideal of $\hat{R}$ is extended from a principal ideal of $R$.
\item\label{it:comideal} Every ideal of $\hat{R}$ is extended from $R$.
\end{enumerate}
\end{thm}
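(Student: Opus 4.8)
The plan is to prove the implications $(2)\Rightarrow(1)\Rightarrow(3)\Rightarrow(4)$ together with $(4)\Rightarrow(3)\Rightarrow(2)$, which jointly yield the equivalence of all four conditions; throughout I use that the completion map $R\to\hat R$ of a Noetherian local ring is always faithfully flat, hence cyclically pure. The implication $(2)\Rightarrow(1)$ is immediate, since a Gaussian algebra is by definition Ohm-Rush. The implication $(3)\Rightarrow(2)$ is exactly Proposition~\ref{pr:pureprincipal} applied to $R\to\hat R$. For $(3)\Rightarrow(4)$: since $\hat R$ is Noetherian, every ideal is a finite sum of principal ideals, and a finite sum of extended ideals is extended. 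For $(4)\Rightarrow(3)$: if $g\hat R=I\hat R$ with $I=(a_1,\dots,a_n)R$, then the principal ideal $g\hat R$ of the local ring $\hat R$ is generated modulo $\n g\hat R$ by the images of the $a_i$, so Nakayama's lemma forces $a_i\hat R=g\hat R$ for some $i$ (unless $g=0$, in which case take $r=0$), producing a principal preimage.

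The real content is $(1)\Rightarrow(3)$. Assume $R\to\hat R$ is Ohm-Rush and fix $g\in\hat R$ with $g\neq 0$; I must produce $r\in R$ with $g\hat R=r\hat R$. I will use three standard facts about the content function: in any Ohm-Rush algebra $c(fh)\subseteq c(f)c(h)$; since the Ohm-Rush hypothesis gives $f\in c(f)\hat R$ for every $f$, one also has $c(f+f')\subseteq c(f)+c(f')$; and by faithful flatness $c(r)=rR$ for $r\in R$, while $c(h)\subseteq\m^n$ whenever $h\in\m^n\hat R$. Since $\hat R/\n^n\cong R/\m^n$, for each $n$ pick $r_n\in R$ with $g-r_n\in\n^n=\m^n\hat R$. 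Applying subadditivity of the content to $g=r_n+(g-r_n)$ and to $r_n=g+(r_n-g)$ gives $c(g)\subseteq r_nR+\m^n$ and $r_nR\subseteq c(g)+\m^n$; after replacing $r_n$ by an element of $c(g)$ congruent to it modulo $\m^n$ (possible by the second containment, and this does not disturb the first), we may assume in addition that $r_n\in c(g)$.

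Now fix $n$ large. From $r_nR\subseteq c(g)\subseteq r_nR+\m^n$ one gets $c(g)=r_nR+(c(g)\cap\m^n)$, and by the Artin--Rees lemma $c(g)\cap\m^n\subseteq\m\,c(g)$ once $n$ is large enough; hence $c(g)=r_nR+\m\,c(g)$, and Nakayama's lemma yields $c(g)=r_nR$. In particular $c(g)$ is principal. The Ohm-Rush hypothesis then gives $g\in c(g)\hat R=r_n\hat R$, say $g=r_n\xi$ with $\xi\in\hat R$. If $\xi$ were a non-unit, i.e.\ $\xi\in\n$, then $c(g)=c(r_n\xi)\subseteq c(r_n)c(\xi)\subseteq r_nR\cdot\m=\m\,c(g)$, forcing $c(g)=0$ by Nakayama and hence $g=0$, a contradiction. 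So $\xi$ is a unit and $g\hat R=r_n\hat R$ with $r_n\in R$, which is $(3)$ (the case $g=0$ being trivial).

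The step I expect to be the obstacle is showing that $c(g)$ is principal: a priori the Ohm-Rush property gives only $g\in c(g)\hat R$, not that $c(g)\hat R$ and $g\hat R$ coincide, and without pinning $c(g)$ down one cannot control how $g\hat R$ sits over $R$. The device that breaks the impasse is to exploit completeness — approximate $g$ by elements of $R$ and feed the approximations through the subadditivity and submultiplicativity of the content function, which identifies $c(g)$ modulo every power of $\m$ with a principal ideal; Artin--Rees and Nakayama then upgrade this to $c(g)$ itself being principal. Once that is in hand, the inequality $c(r_n\xi)\subseteq c(r_n)c(\xi)$ does the remaining work almost for free.
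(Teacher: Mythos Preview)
Your proof is correct. The only real divergence from the paper is in the hard implication $(1)\Rightarrow(3)/(4)$. The paper proves $(1)\Rightarrow(4)$ in one stroke by invoking the characterization of Ohm-Rush as ``extension commutes with arbitrary intersection'': with $I_t := g_tR+\m^t$ one has $I_t\hat R = g\hat R + \n^t$, so $\bigcap_t I_t\hat R = g\hat R$ by Krull intersection in $\hat R/g\hat R$; then $c(g)\subseteq\bigcap_t I_t$ and $(\bigcap_t I_t)\hat R=\bigcap_t(I_t\hat R)=g\hat R$ force $c(g)\hat R\subseteq g\hat R\subseteq c(g)\hat R$, and every ideal of $\hat R$ is a finite sum of such $c(g)\hat R$. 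Your argument proves $(1)\Rightarrow(3)$ instead, without ever using the intersection characterization: you identify $c(g)$ modulo each $\m^n$ via subadditivity, upgrade to $c(g)=r_nR$ via Artin--Rees and Nakayama, and then use $c(r_n\xi)\subseteq c(r_n)c(\xi)$ to rule out $\xi\in\n$. The paper's route is shorter and more conceptual; yours is a bit more hands-on but has the pleasant byproduct of explicitly pinning down $c(g)$ as $r_nR$ for any sufficiently good approximation $r_n\in R$ of $g$, and it relies only on the elementwise content-function axioms rather than the global intersection property.
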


\begin{proof}
We have (\ref{it:comGauss})   $\implies$  (\ref{it:comOR})   by definition, and Proposition~\ref{pr:pureprincipal} shows that (\ref{it:comprin}) $\implies$ (\ref{it:comGauss}).  To see that (\ref{it:comideal}) $\implies$ (\ref{it:comprin}), let $g\in \hat{R}$.  Then there is some ideal $I$ of $R$ with $g\hat{R} = I\hat{R}$, which by \cite[Remark 3.3]{nmeSh-Gauss} must be a principal ideal of $R$.

It remains only to show that (\ref{it:comOR}) $\implies$ (\ref{it:comideal}).  So suppose the completion map is Ohm-Rush.  Let $0 \neq g\in \hat{R}$.  For each $t\in \N$, choose $g_t \in R$  such that $g-g_t \in \n^t$.  Set $I_t := g_tR + \m^t$.  Then $I_t \hat{R} = g\hat{R} + \n^t$.  By the Krull intersection theorem applied to the quotient ring $\hat{R}/g\hat{R}$, then, we have $\bigcap_t (I_t\hat{R})=g\hat{R}$. Thus,  $c(g) \subseteq \bigcap_t I_t = (\bigcap_t I_t)\hat{R} \cap R  = (\bigcap_t I_t \hat{R}) \cap   R   = g\hat{R} \cap R$.  Hence, $g\hat{R}  \subseteq c(g)\hat{R} \subseteq (g\hat{R} \cap R)\hat{R} \subseteq g\hat{R}$.  It   follows that $g\hat{R}=c(g)\hat{R}$.  Now let $J$ be any nonzero ideal of $\hat{R}$.  Say $J= (f_1, \ldots, f_n)$ with each $f_i \neq 0$.  Then by the above, $J = (\sum_{i=1}^n c(f_i)) \hat{R}$.
\end{proof}

This yields the following useful necessary criterion for Ohm-Rushness of the completion map:

\begin{cor}
Let $(R,\m)$ be a Noetherian local ring and $(\hat{R}, \n)$ its $\m$-adic completion.  Suppose there is some nonzero $g\in \hat{R}$ such that $g\hat{R} \cap R = 0$.  Then the completion map $R \ra \hat{R}$ is not Ohm-Rush.
\end{cor}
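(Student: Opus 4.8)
The plan is to argue by contradiction, feeding the hypothesis directly into Theorem~\ref{thm:ORcompletion}. Suppose, for contradiction, that the completion map $R \ra \hat{R}$ were Ohm-Rush. By Theorem~\ref{thm:ORcompletion}, the implication (\ref{it:comOR}) $\implies$ (\ref{it:comideal}) tells us that every ideal of $\hat{R}$ is extended from $R$; in particular, the principal ideal $g\hat{R}$ equals $I\hat{R}$ for some ideal $I$ of $R$.

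Next I would contract back down to $R$. Since $R \ra \hat{R}$ is faithfully flat, it is cyclically pure (as noted just before Proposition~\ref{pr:pureprincipal}), so $I = I\hat{R} \cap R = g\hat{R} \cap R$, which is $0$ by hypothesis. Hence $g\hat{R} = I\hat{R} = 0$, and since $\hat{R}$ is a nonzero commutative ring with identity, this forces $g = g\cdot 1 = 0$, contradicting the assumption that $g \neq 0$. Therefore the completion map is not Ohm-Rush. (Alternatively, one could extract from the proof of Theorem~\ref{thm:ORcompletion} the sharper intermediate inclusion $c(g) \subseteq g\hat{R} \cap R$ valid for every nonzero $g \in \hat{R}$; under the hypothesis this gives $c(g) = 0$, and then the Ohm-Rush property would yield $g \in c(g)\hat{R} = 0$, the same contradiction.)

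I do not expect a genuine obstacle here: the mathematical content is entirely carried by Theorem~\ref{thm:ORcompletion}, and this statement is essentially its contrapositive applied to a single witnessing element. The only point requiring mild care is the contraction step, where one must remember to invoke faithful flatness (equivalently, cyclic purity of the completion map) to pass from $I\hat{R} \cap R = g\hat{R} \cap R = 0$ back to $I = 0$, and then observe that $g\hat{R} = 0$ in a ring with unit forces $g = 0$.
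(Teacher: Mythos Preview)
Your proof is correct and follows essentially the same route as the paper's: both argue by contradiction via Theorem~\ref{thm:ORcompletion} and then use cyclic purity of the completion map to contract $g\hat{R}$ back to $R$. The only cosmetic difference is that the paper invokes condition (\ref{it:comprin}) to write $g\hat{R} = r\hat{R}$ with $r \in R$ and derives $0 = rR \neq 0$, whereas you invoke condition (\ref{it:comideal}) to write $g\hat{R} = I\hat{R}$ and derive $I = 0$, hence $g = 0$; these are interchangeable.
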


\begin{proof}
If it is Ohm-Rush, then by  Theorem~\ref{thm:ORcompletion}, $g\hat{R}= r\hat{R}$ for some $r\in R$. In particular, $r\neq 0$.  But then $0=g\hat{R} \cap R =   (rR)\hat{R} \cap R = rR \neq 0$, which is absurd.
\end{proof}

This theorem allows us to use a result of Hassler and Wiegand regarding extended \emph{modules}.  First recall the following:

\begin{defn}\cite{HaWi-extend}
Let $R$ and $S$ be Noetherian local rings and $(R,\m) \ra (S,\n)$
 a flat local homomorphism. Given a finitely generated $S$-module $N$, we say $N$ is \emph{extended} (from $R$) provided there is an $R$-module $M$ such that $S \otimes_RM$ is isomorphic to $N$ as an $S$-module.
\end{defn}

\begin{rem}\label{rem:extproperties}
Note that the $R$-module $M$ is forced to be finitely generated, by \cite[Ch. 1, \S3(6), Proposition 11]{Bour-CA}.  Moreover, if $\m S = \n$, then $\mu_R(M) = \mu_S(N)$ (where we use the symbol $\mu$ to denote the minimal number of generators of a module).  To see this, recall the well-known formula that for a finite-length $R$-module $L$, if $S/\m S$ has finite vector-space dimension over $R$ we have $\len_S(S \otimes_R L) = \len_S(S/\m S)\cdot \len_R(L)$.  Thus, \begin{align*}
\mu_S(S \otimes_R M) &= \len_S((S \otimes_R M) / \n(S \otimes_R M)) = \len_S((S \otimes_R M) / \m (S \otimes_R M)) \\
&= \len_S (S \otimes_R M / \m M) = \len_S(S/\m S) \cdot \len_R(M/\m M) = 1 \cdot \mu_R(M).
\end{align*}
In particular, $N$ is a cyclic $S$-module iff $M$ is a cyclic $R$-module.

It follows that $J$ is an extended ideal from $R$ (in the usual sense) $\iff S/J$ is an extended $S$-\emph{module} from $R$ in the Hassler-Wiegand sense.  For one direction, if $J$ is an extended ideal, $J=IS$ for some ideal $I$ of $R$, whence $S/J = S/IS \cong S \otimes_R R/I$.  For the other direction, if $S/J$ is extended from $R$, say $S/J \cong S \otimes_R M$, then by the above, we have $1=\mu_S(S/J) = \mu_R(M)$, whence $M$ is cyclic. In particular, $M\cong R/I$, where $I=\ann_R(M)$.  Thus, $S/J \cong S \otimes_R R/I = S/IS$ as $S$-modules, whence $J=IS$.
\end{rem}

Recall the following useful result on extended modules:

\begin{prop}[{\cite[Corollary 4.5]{HaWi-extend}}]\label{pr:HW-extend}
Let $(R,\m)$ and $(S, \n)$ be one-dimensional Noetherian local rings, and let $(R,\m) \ra (S, \n)$ be a flat local homomorphism such that $\n=\m S$ and the induced map $R/\m \ra S/\m S$ of residue fields is an isomorphism. Let $K(S)$ be the quotient ring of $S$ obtained by inverting the complement of the union of the height zero primes of $S$.  The following are equivalent: \begin{enumerate}
    \item For any finitely generated $S$-module $N$ such that $K(S) \otimes_S N$ is projective as a $K(S)$-module, $N$ is extended from $R$.
    \item The natural map $\Spec(S) \ra \Spec(R)$ is bijective.
\end{enumerate}
\end{prop}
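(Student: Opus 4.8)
The plan is to establish the two implications separately; $(1)\Rightarrow(2)$ is elementary, while $(2)\Rightarrow(1)$ contains all the work. For $(1)\Rightarrow(2)$ I would prove the contrapositive. Faithful flatness makes $\Spec S\to\Spec R$ surjective and carries $\n$ to $\m$, so, both rings being one-dimensional, the only way bijectivity can fail is that two distinct minimal primes $Q_1\neq Q_2$ of $S$ satisfy $Q_1\cap R=Q_2\cap R=:P$. Set $\ib:=\ker(S\to S_{Q_1})$ and $N:=S/\ib$. Since $S/\ib$ embeds in the Artinian local ring $S_{Q_1}$ along a localization map it has a unique minimal prime, so $\sqrt{\ib}=Q_1$; hence $\ib\subseteq Q_1$ but $\ib\not\subseteq Q$ for every other minimal prime $Q$, giving $N_{Q_1}\cong S_{Q_1}$ and $N_Q=0$ for all minimal $Q\neq Q_1$. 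Thus $K(S)\otimes_S N$ is the direct summand of $K(S)=\prod_{Q\in\min S}S_Q$ cut out by the idempotent supported at $Q_1$, so it is $K(S)$-projective and $N$ satisfies the hypothesis of $(1)$. But if $N$ were extended, say $N\cong S\otimes_R M$, then $N_{Q_i}\cong S_{Q_i}\otimes_{R_P}M_P$ for $i=1,2$, so $N_{Q_1}$ and $N_{Q_2}$ would vanish simultaneously or not at all --- contradicting $N_{Q_1}\neq 0=N_{Q_2}$. Hence $(1)$ fails.

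For $(2)\Rightarrow(1)$ I would first record two standing consequences of $\n=\m S$ and $R/\m\cong S/\m S$. By flatness, $\m^iS/\m^{i+1}S\cong(\m^i/\m^{i+1})\otimes_{R/\m}(S/\m S)\cong\m^i/\m^{i+1}$, so the natural map $R/\m^t\to S/\m^tS$ is an isomorphism for every $t$ (it is injective by faithful flatness and is an isomorphism on $\m$-adic associated graded modules), and hence $\hat{R}\cong\hat{S}$. It follows that: (a) every finite-length $S$-module $T$ is, through $S/\m^tS\cong R/\m^t$, canonically extended from $R$, say $T\cong S\otimes_R T_0$, with $T_0\to S\otimes_R T_0$ the identity; and (b) for finitely generated $R$-modules $M',M''$ with $M''$ of finite length, the base-change isomorphism $\Ext^1_S(S\otimes_R M',S\otimes_R M'')\cong S\otimes_R\Ext^1_R(M',M'')$ (valid since $M'$ admits a finite free resolution and $S$ is flat) has finite-length target, so $\Ext^1_R(M',M'')\to S\otimes_R\Ext^1_R(M',M'')$ is an isomorphism. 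Now let $N$ be finitely generated over $S$ with $K(S)\otimes_S N$ projective and let $T\subseteq N$ be the torsion submodule: $T$ has finite length, $K(S)\otimes_S T=0$ (it is killed by a nonzerodivisor of $S$, hence by a unit of $K(S)$), the quotient $N/T$ is torsion-free, and $K(S)\otimes_S(N/T)\cong K(S)\otimes_S N$ is projective. Applying (a) and (b) to $0\to T\to N\to N/T\to 0$ shows $N$ is extended once $N/T$ is, so I may assume $N$ torsion-free.

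For torsion-free $N$, one checks $K(S)=K(R)\otimes_R S$ (both are the localization of $S$ at its set of height-zero primes, once one-dimensionality is used to identify the relevant primes), so $V:=K(S)\otimes_S N=K(R)\otimes_R N$ is a finitely generated projective module over the Artinian semilocal ring $K(S)=\prod_{Q\in\min S}S_Q$, hence free over each factor: $V\cong\prod_Q S_Q^{\,n_Q}$. This is the one point where bijectivity of $\min S\to\min R$ is used: writing $Q(P)$ for the unique minimal prime of $S$ lying over $P\in\min R$, the $K(R)$-module $W:=\prod_{P\in\min R}R_P^{\,n_{Q(P)}}$ is finitely generated projective and satisfies $K(S)\otimes_{K(R)}W\cong V$ (because $Q(P(Q))=Q$); since $K(R)\to K(S)$ is faithfully flat, $W$ embeds in $V$ as a $K(R)$-lattice spanning it.

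The remaining step --- descending the $S$-lattice $N\subseteq V$ to a finitely generated $R$-submodule $M$ with the canonical map $S\otimes_R M\to N$ an isomorphism, the natural candidate being $M:=N\cap W$ --- is the one I expect to be the main obstacle. Here one must control $N$ simultaneously at the closed point (via $\hat{N}$, a module over $\hat{R}\cong\hat{S}$) and at the height-zero primes (via $V$), and show that the two pieces of data patch to an honest $R$-module. I would carry this out by the standard patching/conductor-square technique for one-dimensional rings: build $M$ from the generic datum $W$ together with the completion datum, use $\hat{R}\cong\hat{S}$ and (a)--(b) to descend to $R$ the finite-length discrepancy between $\hat{N}$ and the naive extension of $W$, and then check that $S\otimes_R M\to N$ is an isomorphism after completion and after localizing at each height-zero prime (where it is the identity on $V$), so that faithful flatness of $R\to S$ and $\dim S=1$ force it to be an isomorphism. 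Everything up to this point is bookkeeping with the identifications $S/\m^tS\cong R/\m^t$ and $K(S)\cong K(R)\otimes_R S$; the patching is where the genuine content of \cite[\S4]{HaWi-extend} lies.
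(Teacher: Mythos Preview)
The paper does not prove this proposition. It is quoted as \cite[Corollary 4.5]{HaWi-extend} and used as a black box in the proof of the subsequent Theorem~\ref{thm:dim1completion}; no argument is supplied. So there is no paper proof to compare your proposal against.

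That said, your outline is a reasonable reconstruction of the Hassler--Wiegand strategy. The direction $(1)\Rightarrow(2)$ is correct as written. For $(2)\Rightarrow(1)$, your reductions---finite-length $S$-modules are extended via the isomorphisms $R/\m^t\cong S/\m^tS$, and reduction to the torsion-free case via the $\Ext$ comparison---are sound, and you correctly locate the substantive step in the descent of the $S$-lattice $N\subseteq V$ to an $R$-lattice $M$ by patching the completion data with the generic data. You explicitly stop short of carrying that step out and defer to \cite[\S4]{HaWi-extend}, so what you have is an outline rather than a complete proof; but since the present paper likewise declines to reproduce the argument and simply cites the result, this is entirely consistent with how the proposition is treated here.
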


Next is the main theorem of this section.

\begin{thm}\label{thm:dim1completion}
Let $(R,\m)$ be a $1$-dimensional reduced Noetherian local ring.  Then the following are equivalent: \begin{enumerate}
    \item\label{it:allextend} Every finitely generated $\hat{R}$-module is extended from $R$.
    \item\label{it:idealextend} Every ideal of $\hat{R}$ is extended from $R$.
    \item\label{it:prextend} Every prime ideal of $\hat{R}$ is extended from $R$.
    \item\label{it:redbij} $\hat{R}$ is reduced and the natural map $\Spec(\hat{R}) \rightarrow \Spec(R)$ is a bijection.
    \item\label{it:comORdim1} The completion map $R \rightarrow \hat{R}$ is Ohm-Rush.
    \item\label{it:comGaussdim1} The completion map $R \ra \hat{R}$ is Gaussian.
\end{enumerate}
\end{thm}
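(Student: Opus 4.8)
The plan is to run the cycle $(1)\Rightarrow(2)\Rightarrow(3)\Rightarrow(4)\Rightarrow(1)$ and to attach $(5)$ and $(6)$ to it separately. The items $(5)$ and $(6)$ are precisely the Ohm-Rush and Gaussian conditions treated in Theorem~\ref{thm:ORcompletion}, and item $(2)$ is exactly the condition there that every ideal of $\hat{R}$ is extended from $R$; so Theorem~\ref{thm:ORcompletion} already gives $(2)\Leftrightarrow(5)\Leftrightarrow(6)$, and it remains to deal with $(1),(2),(3),(4)$. For $(1)\Rightarrow(2)$ I would apply $(1)$ to the cyclic module $\hat{R}/J$ for an arbitrary ideal $J\subseteq \hat R$: it is then extended from $R$, so by Remark~\ref{rem:extproperties} $J$ is an extended ideal. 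The implication $(2)\Rightarrow(3)$ is immediate.

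For $(3)\Rightarrow(4)$, surjectivity of $\Spec(\hat R)\to\Spec(R)$ is automatic from faithful flatness. For injectivity, if $P_1,P_2\in\Spec(\hat R)$ both lie over $\p\in\Spec(R)$, then by $(3)$ we may write $P_i=I_i\hat R$; contracting and using cyclic purity ($I_i\hat R\cap R=I_i$) gives $I_i=P_i\cap R=\p$, hence $P_1=\p\hat R=P_2$. For reducedness of $\hat R$: going-down for the flat map $R\to\hat R$ shows that every minimal prime of $\hat R$ contracts to a minimal prime of $R$, and the contraction argument just used identifies it with $\p\hat R$ for that minimal prime $\p$; conversely, over each minimal prime $\p$ of $R$ the fiber is nonempty, so there is a minimal prime of $\hat R$ mapping into $\p$, which the same argument shows equals $\p\hat R$. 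Thus the minimal primes of $\hat R$ are exactly the ideals $\p\hat R$ with $\p$ minimal in $R$, and since $R$ is Noetherian there are finitely many of them, so the nilradical of $\hat R$ is $\bigcap_{\p}\p\hat R=\bigl(\bigcap_{\p}\p\bigr)\hat R=0$ --- the middle equality because extension along a flat map commutes with finite intersections, the last because $R$ is reduced. Hence $\hat R$ is reduced and $\Spec(\hat R)\to\Spec(R)$ is a bijection.

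For $(4)\Rightarrow(1)$ I would invoke Proposition~\ref{pr:HW-extend} with $S=\hat R$. Its hypotheses transfer to the completion map: $\hat R$ is a $1$-dimensional Noetherian local ring (dimension is preserved under completion), the map $R\to\hat R$ is flat local with $\n=\m\hat R$, and $R/\m\to\hat R/\m\hat R$ is an isomorphism. The key observation is that, because $\hat R$ is reduced and $1$-dimensional, its height-zero primes are its minimal primes, so $K(\hat R)$ --- the localization of $\hat R$ at the complement of the union of those primes --- is a reduced Noetherian ring of dimension zero, hence a finite product of fields, hence semisimple; consequently every $K(\hat R)$-module is projective and the projectivity hypothesis in the first condition of Proposition~\ref{pr:HW-extend} is vacuous. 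Since $(4)$ provides the bijectivity of $\Spec(\hat R)\to\Spec(R)$, which is the second condition there, Proposition~\ref{pr:HW-extend} yields that every finitely generated $\hat R$-module is extended from $R$, which is $(1)$, closing the cycle.

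The main obstacle, and really the only nontrivial ingredient, is Proposition~\ref{pr:HW-extend}: the two things one must get right are verifying that its hypotheses hold for the completion map and noticing that reducedness of the $1$-dimensional ring $\hat R$ forces $K(\hat R)$ to be semisimple, which is exactly what upgrades ``every module with projective generic fiber is extended'' to ``every finitely generated module is extended.'' The reducedness half of $(3)\Rightarrow(4)$ is a routine but slightly delicate minimal-prime chase that should be written out carefully.
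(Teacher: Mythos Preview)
Your proposal is correct and follows essentially the same route as the paper: the cycle $(4)\Rightarrow(1)\Rightarrow(2)\Rightarrow(3)\Rightarrow(4)$ with $(2)\Leftrightarrow(5)\Leftrightarrow(6)$ attached via Theorem~\ref{thm:ORcompletion}, invoking Proposition~\ref{pr:HW-extend} for $(4)\Rightarrow(1)$, Remark~\ref{rem:extproperties} for $(1)\Rightarrow(2)$, and the contraction/going-down/flat-intersection argument for $(3)\Rightarrow(4)$. The only cosmetic difference is that you obtain surjectivity of the Spec map directly from faithful flatness, whereas the paper spells it out via going-down; your handling of the minimal-prime chase in the reducedness step is slightly terser but equivalent.
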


\begin{proof}
If $\hat{R}$ is reduced and the Spec map is bijective, then $K(\hat{R})$ is a finite product of fields, so all modules over it are projective.  Hence by Propositions~\ref{pr:HW-extend}, all finite $\hat{R}$-modules are extended.  Thus (\ref{it:redbij}) $\implies$ (\ref{it:allextend}).  The fact that (\ref{it:allextend}) $\implies$ (\ref{it:idealextend}) follows from Remark~\ref{rem:extproperties}.  The implication (\ref{it:idealextend}) $\implies$ (\ref{it:prextend}) is trivial.  The equivalence of the three conditions (\ref{it:idealextend}), (\ref{it:comORdim1}) and (\ref{it:comGaussdim1}) follows from Theorem~\ref{thm:ORcompletion}.  

For the implication (\ref{it:prextend}) $\implies$ (\ref{it:redbij}), suppose that all prime ideals of $\hat{R}$ are extended from $R$. If $P$ is a prime ideal of $\hat{R}$, let $J$ be an ideal of $R$ with $P = J\hat{R}$.  Then $P \cap R = J\hat{R} \cap R = J$ by purity of the map $R \ra \hat{R}$.  Hence, $P = J\hat{R} = (P \cap R)\hat{R}$.  Now let $P, P' \in \Spec \hat{R}$ with $P \cap R = P' \cap \hat{R}$.  Then $P = (P \cap R)\hat{R} = (P' \cap R)\hat{R}= P'$. Hence, the Spec map is injective.  For surjectivity, we need to show that all prime ideals of $R$ are contracted from prime ideals of $\hat{R}$, so let $\p$ be a prime ideal of $R$.  If $\p=\m$, then $\p=\m \hat{R} \cap R = \m$ is contracted from the maximal ideal of $\hat{R}$.  If $\p$ is a minimal prime of $R$, then by the Going-Down property (associated to the inclusion $\p \subset \m$ and the contraction $\m=\hat{\m} \cap R$) there is some prime $P$ of $\hat{R}$ lying over $\p$.  Since prime ideals of $\hat{R}$ are extended, we have $P=(P \cap R)\hat{R} = \p \hat{R}$.  Thus, $\p \hat{R}$ is prime and $\p \hat{R} \cap R = \p$, so the Spec map is surjective.  Thus, it is bijective, and we have moreover shown that every prime ideal of $\hat{R}$ is of the form $\p\hat{R}$, where $\p$ is a prime ideal of $R$.

Finally, we want to prove that $\hat{R}$ is reduced.  For this, let $P_1, \ldots, P_n$ be the minimal primes of $\hat{R}$.  Then we have $P_j = \p_j \hat{R}$, where $\p_1,\ldots, \p_n$ are the minimal primes of $R$.  So the nilradical of $\hat{R}$ is $\bigcap_{i=1}^n P_i = \bigcap_i (\p_i \hat{R}) = (\bigcap_i \p_i) \hat{R}$ since the completion map is flat. But $\bigcap_{i=1}^n \p_i$  is the nilradical of $R$, hence $0$ since $R$ is reduced.  Thus, the nilradical of $\hat{R}$ is also zero.
\end{proof}

In particular, for a 1-dimensional Noetherian local ring whose completion is a domain, the completion map is Gaussian:

\begin{cor}\label{cor:1ddomcompletion}
Let $(R,\m)$ be a 
 Noetherian local integral domain of Krull dimension 1.  Then the map $R \ra \hat{R}$ is Gaussian if and only if it is Ohm-Rush if and only if $R$ is analytically irreducible.
\end{cor}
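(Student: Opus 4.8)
The plan is to read this off Theorem~\ref{thm:dim1completion}. Since $R$ is an integral domain it is reduced, and it has Krull dimension $1$, so Theorem~\ref{thm:dim1completion} applies to $R$. Its conditions (\ref{it:comORdim1}) and (\ref{it:comGaussdim1}) are equivalent, which is exactly the assertion that the completion map is Ohm-Rush if and only if it is Gaussian. So the only real work is to show that these conditions are equivalent to $R$ being analytically irreducible, that is, to $\hat{R}$ being an integral domain; I would do this by comparing with condition (\ref{it:redbij}) of that theorem: $\hat{R}$ is reduced and $\Spec(\hat{R}) \ra \Spec(R)$ is a bijection.

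For the implication ``analytically irreducible $\implies$ Ohm-Rush'', suppose $\hat{R}$ is a domain. Then $\hat{R}$ is reduced, and it is a $1$-dimensional Noetherian local domain because $\dim \hat{R} = \dim R = 1$; hence $\Spec(\hat{R})$ consists of precisely the two primes $(0)$ and $\n$. Since $\Spec(R) = \{(0), \m\}$ as well, and the contraction map sends $(0) \mapsto (0)$ and $\n \mapsto \n \cap R = \m$, the spectral map is a bijection. Thus condition (\ref{it:redbij}) of Theorem~\ref{thm:dim1completion} holds, so the completion map is Ohm-Rush (even Gaussian).

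For the converse, suppose the completion map is Ohm-Rush; then by Theorem~\ref{thm:dim1completion} (equivalence of (\ref{it:idealextend}) and (\ref{it:comORdim1})) every ideal of $\hat{R}$ is extended from $R$. Let $P$ be a minimal prime of $\hat{R}$. Its contraction $P \cap R$ is a prime of $R$, hence is $(0)$ or $\m$. The case $P \cap R = \m$ cannot occur: it would give $P \supseteq \m\hat{R} = \n$, so $P = \n$, but a minimal prime has height $0$ while $\height_{\hat{R}} \n = \dim \hat{R} = 1$. Therefore $P \cap R = (0)$, and since $P$ is extended we get $P = (P \cap R)\hat{R} = (0)$. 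Hence $(0)$ is prime in $\hat{R}$, so $\hat{R}$ is a domain and $R$ is analytically irreducible.

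I do not expect a serious obstacle here: the substance is carried by Theorem~\ref{thm:dim1completion}, and what remains uses only the standard facts that completion preserves dimension and that a $1$-dimensional Noetherian local domain has exactly two prime ideals. The one point deserving a little care is in the converse direction, where one must use the extendedness of ideals of $\hat{R}$ (not merely bijectivity of the spectral map) to promote ``$P \cap R = (0)$'' to ``$P = (0)$''.
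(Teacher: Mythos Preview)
Your proposal is correct and follows exactly the route the paper intends: the corollary is stated immediately after Theorem~\ref{thm:dim1completion} with no separate proof, so deducing it from the equivalences there (in particular from condition~(\ref{it:redbij})) is precisely what is expected. Your argument is a faithful unpacking of that deduction; the only remark is that for the converse you could equally well have used condition~(\ref{it:redbij}) directly (reducedness plus $|\Spec \hat{R}| = |\Spec R| = 2$ forces the unique minimal prime to be $(0)$), but your route via extendedness of ideals is just as valid.
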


However, the above results do not extend to higher dimension, even in the case of regular local rings essentially of finite type over a field, as the following example demonstrates:

\begin{example}\label{ex:dim2}
Let $R=k[x,y, z_1, \ldots, z_n]_{(x,y, z_1,\ldots, z_n)}$ where  $k$ is a field with char$(k)\neq 2$.  Then $\hat{R}=k[\![x,y, z_1,...z_n]\!]$.  Let $f := x^2-y^2(y+1)$.  By the Eisenstein irreducibility criterion applied to the prime element  $y+1$ of $k[y]$ in the polynomial extension $R=k[y][x, z_1,...,z_n]$, we have that $f$ is a prime element of $R$.   However, the coefficients in the Maclaurin expansion of the square root of $y+1$ can be written with denominators that only involve powers of $2$.  Hence, since $1/2 \in R \subseteq \hat{R}$, $y+1$ has a square root in $\hat{R}$.  Therefore, $f=x^2 - y^2(y+1) = (x+y\sqrt{y+1})(x-y\sqrt{y+1})$ presents $f \in \hat{R}$ as a nontrivial product of elements of the maximal ideal of $\hat{R}$.  It follows that $f\hat{R}$ is neither prime nor the unit ideal.
Hence, since the extension $f\hat{R}$ of the prime ideal $fR$ of $R$ is not prime in $\hat{R}$, $R \ra \hat{R}$ is not a content algebra.  But then by Theorem~\ref{thm:ORcompletion}, it cannot even be Ohm-Rush.

We also note that if char$(k)=2$, then $f :=x^3- y^3(y+1)$ is also irreducible  in $R$  by the Eisenstein criterion.   We leave it to the reader to apply (the generalized) Hensel's Lemma to see that $y+1$ has a cube root in $k[\![y]\!]$, from which it follows that $f$ is not irreducible in $\hat{R}$.  Thus in this case as well $\hat{R}$ is not a content $R$-algebra.
\end{example}

\section{Globalization of the Ohm-Rush property over a one-dimensional Noetherian domain}\label{sec:one}

In this section, we analyze the question of whether and when the Ohm-Rush property globalizes over a 1-dimensional Noetherian domain base.  That is, when $N$ is a faithfully flat $R$-module such that $N_\m$ is an Ohm-Rush $R_\m$-module for every maximal ideal $\m$ of $R$, does it follow that $N$ is an Ohm-Rush $R$-module?  We will see that the answer is `yes' if \begin{enumerate}
    \item $R$ is a Dedekind domain and $N$ is a Noetherian module over some $R$-algebra (See Theorem~\ref{thm:DedORglobal}), or 
    \item $R$ is a 1-dimensional integral domain and $N$ is a Noetherian $R$-algebra that is an integral domain (See Theorem~\ref{thm:dim1ORglobal}).
\end{enumerate}
We end the section with two examples, one of which is a faithfully flat $\Z$-algebra that is Ohm-Rush locally but not globally.

We start with a criterion to detect when a faithfully flat algebra over a DVR is a content algebra.

\begin{prop}\label{pr:DVRbase}
Let $(R,\m)$ be a DVR, and let $S$ be a faithfully flat $R$-algebra. Then $S$ is a content $R$-algebra (equivalently Gaussian) if and only if $\m S \in \Spec S$ and $\bigcap_n \m^n S = 0$.
\end{prop}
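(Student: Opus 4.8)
The plan is to prove the two implications separately; the forward direction is immediate from facts already recalled, while the substance lies in the converse. For the forward direction, suppose $S$ is a content $R$-algebra. Then $S$ is Ohm-Rush and faithfully flat, so $\m S \neq S$; and since prime ideals extend to prime ideals in a content algebra \cite[Theorem 1.2]{Ru-content}, $\m S$ is prime, i.e. $\m S \in \Spec S$. For the intersection condition, let $f \in \bigcap_n \m^n S$: then $\m^n \in L_f$ for every $n$, so $c(f) \subseteq \bigcap_n \m^n = 0$ by Krull's intersection theorem in the Noetherian local ring $R$, whence $f \in c(f)S = 0$ because $S$ is Ohm-Rush. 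Thus $\bigcap_n \m^n S = 0$.

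For the converse, assume $\m S \in \Spec S$ and $\bigcap_n \m^n S = 0$, and fix a uniformizer $\pi$ of $R$, so that $\m^k S = \pi^k S$ for all $k$. I would first show $S$ is Ohm-Rush by computing its content function outright. Given $0 \neq f \in S$, the intersection hypothesis forces a largest integer $n(f)$ with $f \in \pi^{n(f)} S$. Since $0 \notin L_f$ and the remaining ideals $R \supseteq \m \supseteq \m^2 \supseteq \cdots$ of the DVR are totally ordered, we get $L_f = \{\m^k : 0 \le k \le n(f)\}$, hence $c(f) = \bigcap L_f = \m^{n(f)} \in L_f$ and $f \in c(f)S$ (and trivially $c(0)=0$). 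So $S$ is an Ohm-Rush, faithfully flat $R$-algebra. Note that Proposition~\ref{pr:pureprincipal} cannot be invoked here: principal ideals of $S$ need not be extended from principal ideals of $R$ (already in $S = R[x]$), so the explicit formula $c(f)=\m^{n(f)}$ is really the right tool.

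It then remains to prove the Gaussian identity $c(fg) = c(f)c(g)$ for all $f,g \in S$, which is stronger than the content-algebra condition and yields the stated equivalence with Gaussianness. Only $f, g \neq 0$ needs argument, and by the formula above this reduces to $n(fg) = n(f) + n(g)$. The inequality $n(fg) \ge n(f) + n(g)$ is clear from $\m^{n(f)}S \cdot \m^{n(g)}S \subseteq \m^{n(f)+n(g)}S$. For the reverse, write $f = \pi^{n(f)}f'$ and $g = \pi^{n(g)}g'$; maximality of $n(f)$ and $n(g)$ forces $f', g' \notin \pi S = \m S$. If $fg = \pi^{n(f)+n(g)}f'g'$ lay in $\pi^{n(f)+n(g)+1}S$, then cancelling $\pi^{n(f)+n(g)}$ would put $f'g'$ in $\m S$, contradicting primality of $\m S$ together with $f', g' \notin \m S$. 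The delicate point — and the main obstacle to watch — is that this cancellation is legitimate: $\pi$ is a nonzerodivisor on $S$, because multiplication by $\pi$ is injective on $R$ and $S$ is flat over $R$. Granting that, $n(fg) \le n(f) + n(g)$, so equality holds, $S$ is Gaussian, and in particular a content $R$-algebra.
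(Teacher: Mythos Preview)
Your proof is correct, and it takes a genuinely different route from the paper's in both directions. For the forward implication, the paper argues indirectly: it invokes \cite[Exercise 1.1(5)]{Kap-CR} to see that $J=\bigcap_n \m^n S$ is prime, then uses that $(0)S$ is prime (so $S$ is a domain) and that $\hgt(\m S)=\hgt(\m)=1$ via \cite[Theorem 5.4]{nmeSh-OR2} to force $J=0$. Your direct argument via $c(f)\subseteq\bigcap_n \m^n=0$ is shorter and uses only the Ohm-Rush definition. For the converse, the paper dispatches Ohm-Rushness by citing \cite[Proposition 2.1]{OhmRu-content} and then obtains the content property from \cite[Theorem 4.7]{nmeSh-OR}, with the parenthetical ``equivalently Gaussian'' left to the reader (it follows since nonzero ideals in a DVR are invertible, hence cancellable in the Dedekind--Mertens formula). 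You instead compute $c(f)=\m^{n(f)}$ by hand from the total ordering of ideals in $R$ and prove the Gaussian identity directly via $n(fg)=n(f)+n(g)$, using primality of $\m S$ and the fact that $\pi$ is a nonzerodivisor on $S$ by flatness. Your approach is fully self-contained and makes the parenthetical equivalence explicit; the paper's is terser but leans on three external citations. One small point worth making explicit in your write-up: your argument for $n(fg)\le n(f)+n(g)$ also shows $f'g'\notin\m S$, hence $fg\neq 0$, so $n(fg)$ is indeed defined---this is implicit but deserves a word.
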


\begin{proof}
 By \cite[Exercise 1.1(5)]{Kap-CR}, $J = \bigcap_n \m^nS$ is a prime ideal of $S$. If $R \ra S$ is a content algebra then we know that $\m S$ and $(0)$ are prime ideals of $S$, and $\hgt(\m S) = \hgt(\m) = 1$ by \cite[Theorem 5.4]{nmeSh-OR2}. So the intersection must be $0$.

For the converse, first note by \cite[Proposition 2.1]{OhmRu-content} that $S$ is an Ohm-Rush $R$-algebra. 
Then since $\m S$ is prime, and $\m$ is the only maximal ideal of $R$, an appeal to \cite[Theorem 4.7]{nmeSh-OR} finishes the proof.
\end{proof}

For Dedekind domain bases, the criteria for a faithfully flat map being Ohm-Rush are a bit more subtle.  We begin with a lemma about detecting the Ohm-Rush property over a 1-dimensional Noetherian domain:

\begin{lemma}\label{lem:dim1content}
Let $R$ be a 1-dimensional Noetherian domain.  Let $M$ be a flat $R$-module $($or at least an $R$-module where extension of ideals commutes with \emph{finite} intersection$)$, and let $0\neq f\in M$.  Then $f\in c(f)M$ if and only if $L_f$ satisfies the descending chain condition.
\end{lemma}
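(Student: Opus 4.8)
The plan is to prove the two implications separately. Only the direction ``$f \in c(f)M \Rightarrow L_f$ satisfies DCC'' will use that $R$ is a $1$-dimensional Noetherian domain; the reverse direction holds in the stated generality, needing only that extension of ideals in $M$ commutes with finite intersections.

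For ``$L_f$ satisfies DCC $\Rightarrow f \in c(f)M$'': I would first observe that $L_f \neq \emptyset$, since it contains $R$, so by the descending chain condition $L_f$ has a minimal element $I_0$. The crux is that $L_f$ is closed under finite intersection: given $I, I' \in L_f$ we have $f \in IM \cap I'M = (I \cap I')M$, the last equality being exactly the finite-intersection hypothesis on $M$, so $I \cap I' \in L_f$. Applying this with $I' = I_0$ and invoking minimality of $I_0$ forces $I_0 \subseteq I$ for every $I \in L_f$; thus $I_0$ is the least element of $L_f$, whence $I_0 = \bigcap_{I \in L_f} I = c(f)$, and since $I_0 \in L_f$ we get $f \in I_0 M = c(f)M$.

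For ``$f \in c(f)M \Rightarrow L_f$ satisfies DCC'': assuming $f \in c(f)M$ means $c(f) \in L_f$, so $c(f)$ is the least element of $L_f$ and every $I \in L_f$ contains $c(f)$. Since $f \neq 0$ we cannot have $c(f) = 0$, so $c(f)$ is a nonzero ideal of the $1$-dimensional Noetherian domain $R$; hence every prime of $R$ containing $c(f)$ is maximal, $R/c(f)$ has Krull dimension $0$, and therefore $R/c(f)$ is Artinian. Now the order-preserving map $I \mapsto I/c(f)$ embeds $L_f$ into the ideal lattice of the Artinian ring $R/c(f)$, which satisfies DCC; hence $L_f$ inherits DCC.

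I expect no genuine obstacle here. The one spot needing a touch of attention is verifying $c(f) \neq 0$ in the forward direction (this is precisely where the hypothesis $f \neq 0$ enters), since the whole argument hinges on the quotient $R/c(f)$ dropping to dimension zero; once that is in hand, the passage to an Artinian quotient and the intersection trick for the converse are both routine.
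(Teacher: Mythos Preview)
Your argument is correct and follows essentially the same approach as the paper's proof: in one direction you pass to the Artinian quotient $R/c(f)$ (after noting $c(f)\neq 0$), and in the other you exploit closure of $L_f$ under finite intersections together with the existence of a minimal element. The only minor difference is cosmetic: the paper observes that $L_f$ is in fact in bijection with \emph{all} ideals of $R/c(f)$ (since any ideal containing $c(f)$ automatically lies in $L_f$), whereas you use the weaker embedding statement, which is already sufficient.
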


\begin{proof}
If $f\in c(f)M$, then $c(f)\neq 0$, and $c(f)$ is the unique minimal element of $L_f$.  Hence, the ideals of $L_f$ are in one-to-one order-preserving correspondence with the ideals of the Artinian ring $R/c(f)$.  Hence $L_f$ satisfies DCC.

Conversely, suppose $L_f$ satisfies DCC.  Since it is nonempty (e.g. $R\in L_f$), it contains a minimal element.  Moreover, suppose that $I, I' \in L_f$, with $I$ minimal.  Then we have $f \in IM \cap I'M = (I \cap I')M$, so $I \cap I' \in L_f$.  By minimality of $I$, it follows that $I = I \cap I'$, whence $I \subseteq I'$.  Thus, every element of $L_f$ contains $I$, so $I=c(f) \in L_f$, whence $f \in c(f)M$.
\end{proof}

We next present a criterion for a flat module over a Dedekind domain to be Ohm-Rush.

\begin{thm}\label{thm:DedOR}
Let $R$ be a Dedekind domain, and let $M$ be a flat (i.e. torsion-free) $R$-module.  Let $0\neq f \in M$.  Then $f\in c(f)M$ if and only if the following two conditions hold: \begin{enumerate}
\item\label{it:finprimes} $L_f \cap \Spec R$ is finite, and
\item\label{it:finorder} There is some $n\in \N$ such that for all $\p \in \Spec R$, $f \notin \p^n M$.
\end{enumerate}
Hence $M$ is an Ohm-Rush module if and only if (\ref{it:finprimes}) and (\ref{it:finorder}) hold for every nonzero $f\in M$.
\end{thm}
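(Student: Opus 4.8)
The plan is to reduce the statement to Lemma~\ref{lem:dim1content}, which says that over a 1-dimensional Noetherian domain, $f \in c(f)M$ if and only if $L_f$ satisfies DCC; a Dedekind domain is such a ring, and a flat module over a domain is exactly a torsion-free one. So the real content is to show that, over a Dedekind domain, $L_f$ satisfies DCC \emph{if and only if} conditions (\ref{it:finprimes}) and (\ref{it:finorder}) hold. I would prove both implications by exploiting unique factorization of ideals: every nonzero ideal $I$ of $R$ factors uniquely as $I = \prod_{\p} \p^{e_\p(I)}$ with $e_\p(I) \ge 0$ almost all zero, and $I \subseteq J \iff e_\p(I) \ge e_\p(J)$ for all $\p$; moreover $f \in IM \iff f \in \p^{e_\p(I)} M$ for every $\p$, since by flatness $IM = \bigcap_\p \p^{e_\p(I)} M$ (a finite intersection, as only finitely many exponents are nonzero). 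Thus membership in $L_f$ is controlled prime-by-prime: $I \in L_f$ iff $e_\p(I) \le v_\p(f)$ for all $\p$, where I set $v_\p(f) := \sup\{k : f \in \p^k M\} \in \N \cup \{\infty\}$. In particular $0 \ne f$ forces $v_\p(f) < \infty$ for at least one $\p$ (otherwise $f \in \bigcap_\p \p^k M$ for all $k$, which over a domain with a faithfully flat... — more carefully, one notes $\bigcap_k \p^k M = 0$ for any fixed $\p$ since $R_\p$ is a DVR and $M_\p$ is torsion-free, so $v_\p(f) = \infty$ would already give $f$ killed locally at $\p$; I'd want $f \ne 0$ to eventually rule out the pathological global case, and this is the kind of routine check I'd spell out in the write-up).

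For the forward direction, assume $L_f$ satisfies DCC; equivalently (by Lemma~\ref{lem:dim1content} and its proof) $c(f) \ne 0$ and $L_f \leftrightarrow \{$ideals of $R/c(f)\}$. Writing $c(f) = \prod_\p \p^{n_\p}$ with finitely many $n_\p > 0$, I get (\ref{it:finprimes}) because $\p \in L_f$ iff $\p \supseteq c(f)$ iff $n_\p \ge 1$, a finite set; and (\ref{it:finorder}) with $n := 1 + \max_\p n_\p$, since if $f \in \p^n M$ then $\p^n \in L_f$, so $\p^n \supseteq c(f)$, forcing $e_\p(c(f)) = n_\p \ge n$, contradicting the choice of $n$ (here I also need that for $\p \notin \mathrm{Supp}$ of $c(f)$, $f \in \p M$ already fails, so those primes are harmless). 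For the converse, assume (\ref{it:finprimes}) and (\ref{it:finorder}). Then for $\p \notin L_f$ we have $v_\p(f) = 0$; for the finitely many $\p \in L_f$ we have $1 \le v_\p(f) \le n-1 < \infty$ by (\ref{it:finorder}). Hence $I \in L_f$ iff $e_\p(I) \le v_\p(f)$ for all $\p$, and the set of such $I$ is in order-preserving bijection with the finite product $\prod_{\p \in L_f}\{0,1,\dots,v_\p(f)\}$ — a finite poset — so $L_f$ is finite, a fortiori DCC. Then Lemma~\ref{lem:dim1content} gives $f \in c(f)M$, and in fact $c(f) = \prod_{\p \in L_f} \p^{v_\p(f)}$. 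The final sentence of the theorem ("$M$ is Ohm-Rush iff ... for every nonzero $f$") is then immediate: $M$ is Ohm-Rush iff $f \in c(f)M$ for all $f$, the $f = 0$ case being trivial ($c(0) = 0$, $0 \in 0 \cdot M$).

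The main obstacle I anticipate is bookkeeping the passage between "$f \in IM$" and the system of local conditions "$f \in \p^{e_\p(I)}M$" cleanly, i.e., verifying $IM = \bigcap_\p \p^{e_\p(I)}M$ for flat $M$ over a Dedekind domain — this uses that extension commutes with finite intersection for flat modules (cited in the excerpt) together with the coprimality of the primary components $\p^{e_\p}$, which makes $I = \bigcap_\p \p^{e_\p(I)}$ in $R$. Once that translation is in hand, everything else is elementary manipulation with the discrete valuations $v_\p(f)$; the DCC/finiteness equivalence of Lemma~\ref{lem:dim1content} does the remaining work, and unique factorization guarantees that finitely many "active" primes with finite exponents yields a finite $L_f$.
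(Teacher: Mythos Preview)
Your proof is correct and uses the same overall framework as the paper's: both directions go through Lemma~\ref{lem:dim1content} and the unique factorization of nonzero ideals in a Dedekind domain, and your forward direction ($f\in c(f)M \Rightarrow$ (\ref{it:finprimes}) and (\ref{it:finorder})) is essentially identical to the paper's. For the converse, however, the paper argues by contrapositive: assuming (\ref{it:finprimes}) holds but DCC fails, it takes a strictly descending chain $J_0 \supsetneq J_1 \supsetneq \cdots$ in $L_f$, observes that each step satisfies $J_{j+1} \subseteq J_j P_{a_j}$ for one of the finitely many primes $P_1,\dots,P_t$ in $L_f\cap\Spec R$, and concludes by the pigeonhole principle that $f\in P_i^n M$ for arbitrarily large $n$, violating (\ref{it:finorder}). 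Your direct route instead shows that under (\ref{it:finprimes}) and (\ref{it:finorder}) the set $L_f$ is actually \emph{finite}, via the order-preserving bijection $I\mapsto(e_\p(I))_{\p\in L_f\cap\Spec R}$ onto $\prod_\p\{0,\dots,v_\p(f)\}$; this yields slightly more information (an explicit description of $L_f$ and the formula $c(f)=\prod_\p\p^{v_\p(f)}$), at the modest cost of setting up the valuations $v_\p$ and verifying the translation $IM=\bigcap_\p\p^{e_\p(I)}M$ via flatness. One correction: your parenthetical digression claiming that $f\neq 0$ forces $v_\p(f)<\infty$, and that $\bigcap_k\p^kM=0$ because $R_\p$ is a DVR and $M_\p$ is torsion-free, is both unnecessary for the argument and false in general (take $M=\Frac(R)$, where $\p^kM=M$ for all $k$ and all nonzero $\p$); simply delete it, since condition (\ref{it:finorder}) already supplies the needed finiteness of $v_\p(f)$ in the converse, and the forward direction never uses it.
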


\begin{proof}
First suppose $f \in c(f)M$.  Since $R$ is a Dedekind domain and $c(f) \neq 0$, there is a unique (up to ordering) prime decomposition of $c(f)$.  Say $c(f) = \prod_{j=1}^t P_j^{n_j}$, where the $P_j$ are distinct maximal ideals of $R$, $t\geq 0$, and each $n_j \geq 1$.  If $P \in L_f \cap \Spec R$, then $f \in PM$, whence $c(f) \subseteq P$.  Thus some $P_j \subseteq P$, so $P_j=P$.  That is, $L_f \cap \Spec R = \{P_j \mid 1\leq j \leq t\}$ is finite.  Now set $n := \max\{n_j \mid 1\leq j \leq t\}+1$.  If $f\in \p^n M$ for some $\p$, then $\p = P_j$ for some $1\leq j \leq t$.  Then $f\in P_j^n M$, whence $c(f) \subseteq P_j^n$.  It follows that   $P_j^{n_j} \subseteq P_j^n$, whence $n_j \geq n$, which is a contradiction.

Conversely, suppose $f\notin c(f)M$.  Then by Lemma~\ref{lem:dim1content}, $L_f$ admits an infinite descending chain $J_0 \supsetneq J_1 \supsetneq J_2 \supsetneq \cdots$.  Suppose that (\ref{it:finprimes}) holds.  Say $L_f \cap \Spec R = \{P_1, \ldots, P_t\}$. Then these are the only primes in the prime decompositions of the $J_i$.  By the proper containment condition, for each $j \in \N$, there is some $a_j \in \{1, \ldots, t\}$ such that $J_{j+1} \subseteq J_j P_{a_j}$.  Let $n\in \N$.  By the pigeonhole principle, $J_{(n-1)t + 1} \in P_i^n$ for some $1\leq i \leq t$.  Since $n$ was arbitrary, (\ref{it:finorder}) fails.
\end{proof}

The above then allows us to show that the property of being a faithfully flat Ohm-Rush algebra over a Dedekind domain globalizes, at least when the target ring is Noetherian.  We will later see (cf. Example~\ref{ex:X/p}) a counterexample over $\Z$ when the target ring is not Noetherian.

\begin{thm}\label{thm:DedORglobal}
Let $R$ be a Dedekind domain. Let $R \ra S$ be a ring homomorphism.  Let $M$ be a Noetherian $S$-module, considered as an $R$-module via restriction of scalars.  Suppose that for each maximal ideal $\m$ of $R$, the $R_\m$-module $M_\m$ is faithfully flat and Ohm-Rush.  Then $M$ is a faithfully flat Ohm-Rush $R$-module.
\end{thm}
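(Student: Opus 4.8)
The plan is to reduce the global Ohm-Rush statement to the local-at-each-maximal-ideal hypothesis via Theorem~\ref{thm:DedOR}, using the Noetherian hypothesis on $M$ to control the two finiteness conditions appearing there. First I would establish faithful flatness: $M$ is flat over $R$ because flatness is a local property (check at each maximal ideal of $R$, where $M_\m$ is flat by hypothesis), and $M$ is faithful because for each maximal ideal $\m$ of $R$ the faithful flatness of $M_\m$ over $R_\m$ forces $\m M \neq M$ (otherwise $\m M_\m = M_\m$). So $R \ra S$ — or rather $M$ as an $R$-module — is faithfully flat, and since $R$ is a Dedekind domain, $M$ is torsion-free, so Theorem~\ref{thm:DedOR} is applicable.

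Now fix $0 \neq f \in M$; I must verify conditions (\ref{it:finprimes}) and (\ref{it:finorder}) of Theorem~\ref{thm:DedOR} for $f$. The key observation is that localization is innocuous here: for a nonzero prime $\p = \m$ of $R$, one has $f \in \m M$ if and only if $f/1 \in \m M_\m$ (the forward direction is clear; conversely, if $f/1 \in \m M_\m$, then since $M$ is flat, hence $\m M = \m M_{\m} \cap M$ — using that $\m$ is $\m$-local and $M/\m M$ is an $R/\m$-vector space so has no elements killed by elements outside $\m$ — so $f \in \m M$). More generally $f \in \m^n M \iff f/1 \in \m^n M_\m$ by the same reasoning applied to $\m^n$. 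Consequently, for each maximal ideal $\m$, the local hypothesis $f/1 \in c_{R_\m}(f/1)M_\m$ combined with Theorem~\ref{thm:DedOR} applied over the DVR $R_\m$ (where $\m$ is the unique nonzero prime) tells us there is $n_\m \in \N$ with $f/1 \notin \m^{n_\m} M_\m$, i.e.\ $f \notin \m^{n_\m}M$; in particular only finitely many powers $\m, \m^2, \dots$ belong to $L_f$ for each single $\m$.

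The main obstacle — and the place the Noetherian hypothesis on $M$ is essential — is to pass from "finitely many powers of each individual $\m$" to the two \emph{global} finiteness statements: that $L_f \cap \Spec R$ is a \emph{finite} set of primes, and that a \emph{single} $n$ works uniformly across all primes. For the first, I would argue that $c(f)$ (the global content) is nonzero: indeed $c(f) \supseteq \prod$ of something, or more directly, since $M$ is a Noetherian $S$-module, $f$ lies in a finitely generated $R$-submodule and its annihilator behavior is controlled; the cleanest route is to note that the set of maximal ideals $\m$ with $f \in \m M$ is the set of $\m$ with $f/1 \in \m M_\m$, and to show this set is finite by a generic-freeness / support argument: choose a finite free presentation of the $S$-module $M$ near $f$, write $f$ in coordinates, and observe that $f \in \m M$ fails for all but finitely many $\m$ because the "coordinates of $f$" generate an ideal of $R$ contained in only finitely many maximal ideals (here is exactly where Noetherianity of $M$ over $S$, giving finite generation data, is used). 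For the second, once $L_f \cap \Spec R = \{\m_1, \dots, \m_t\}$ is finite, take $n := \max\{n_{\m_1}, \dots, n_{\m_t}\}$; then for any prime $\p$, either $\p \notin L_f$ so $f \notin \p M \supseteq \p^n M$, or $\p = \m_i$ for some $i$ and $f \notin \m_i^{n_{\m_i}}M \supseteq \m_i^n M$. Thus (\ref{it:finprimes}) and (\ref{it:finorder}) hold, so $f \in c(f)M$ by Theorem~\ref{thm:DedOR}; since $f$ was an arbitrary nonzero element, $M$ is Ohm-Rush, completing the proof.
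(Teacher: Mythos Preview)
Your overall architecture is right and matches the paper: reduce to Theorem~\ref{thm:DedOR}, use the local Ohm-Rush hypothesis over each DVR $R_\m$ to get an $n_\m$ with $f \notin \m^{n_\m}M$, and observe that condition~(\ref{it:finorder}) follows from condition~(\ref{it:finprimes}) by taking a maximum. Your localization equivalence $f \in \m^n M \iff f/1 \in \m^n M_\m$ is also fine.

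The gap is in your argument for condition~(\ref{it:finprimes}), which you yourself flag as ``the main obstacle.'' Your sketch (``choose a finite free presentation of the $S$-module $M$ near $f$, write $f$ in coordinates, observe that the coordinates generate an ideal of $R$ contained in only finitely many maximal ideals'') does not work as stated: a finite presentation of $M$ is over $S$, so the coordinates of $f$ live in $S$, not in $R$, and there is no evident way to extract from them a nonzero ideal of $R$ witnessing finiteness of $L_f \cap \Spec R$. Generic freeness would rescue this if $S$ were a finitely generated $R$-algebra, but the theorem makes no such assumption, and indeed the paper later applies the result (Example~\ref{ex:notfg}) to an $S$ that is Noetherian but \emph{not} finitely generated over $R$. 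So the Noetherian hypothesis on $M$ as an $S$-module must be used in some other way.

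The paper's device is as follows. Assuming $f \in \p_i M$ for an infinite sequence of distinct maximal ideals $\p_1, \p_2, \ldots$, with $f \in \p_i^{n_i}M \setminus \p_i^{n_i+1}M$, one sets $Q_0 := Sf$ and inductively $Q_i := (Q_{i-1} :_M \p_i^{n_i})$, an ascending chain of $S$-submodules of $M$. Two short claims, proved using flatness of $M$ over $R$ and the Dedekind property (pairwise comaximality of the $\p_i^{n_i}$), show $Q_{t-1} \subseteq \p_t^{n_t}M$ but $Q_t \nsubseteq \p_t M$, so the chain is strictly ascending, contradicting Noetherianity of $M$ over $S$. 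This is the missing idea in your proposal.
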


\begin{proof}
Since faithful flatness globalizes, we have that $M$ is faithfully flat (and hence also torsion-free) over $R$.  Let $0\neq f\in M$.  We want to show that it satisfies conditions (\ref{it:finprimes}) and (\ref{it:finorder}) of Theorem~\ref{thm:DedOR}.

First note that since $M$ is torsion-free over $R$, for any maximal ideal $\m$ of $R$ we have $f/1 \neq 0$ in $M_\m$.  Then by Theorem~\ref{thm:DedOR} (applied to the flat Ohm-Rush $R_\m$-module $M_\m$), there is some positive integer $n$ (dependent on $\m$) with $f/1 \notin \m^n M_\m$.  It follows that $f\notin \m^n M$.

Next we prove (\ref{it:finprimes}).
Accordingly, let $0\neq f\in M$.  Suppose $f \in \p M$ for infinitely many $\p \in \Spec R$.  Enumerate a countable set of such $\p_i$, $i\in \N$, and for each $i$ find the unique $n_i$ such that $f \in \p_i^{n_i} M \setminus \p_i^{n_i + 1}M$, whose existence is guaranteed by the previous paragraph.  Let $Q_0 := Sf$, and inductively for each $i \geq 1$, set $Q_i := (Q_{i-1} :_M \p_i^{n_i})$.   This is an ascending chain of $S$-submodules of $M$  (i.e. $Q_{i-1} \subseteq Q_i$ for all $i\geq 1$).  Since $M$ is a Noetherian $S$-module, we just need to prove the chain is strict to get a contradiction.

\noindent \textbf{Claim 1}: For all $t\geq 1$, $Q_{t-1} \subseteq \p_t^{n_t} M$.

\begin{proof}[Proof of Claim 1] Let $x\in Q_{t-1}$.  Then by an easy induction, we have \[
\left(\prod_{i=1}^{t-1} \p_i^{n_i}\right) x \subseteq Q_0 = Sf \subseteq \left(\prod_{i=1}^t \p_i^{n_i}\right) M.
\] 
 To see the last containment, we have 
 \begin{align*}
 f \in \bigcap_{i=1}^t (\p_i^{n_i} M) &=  \left(\bigcap_{i=1}^t \p_i^{n_i}\right) M &\text{by flatness}\\
 &= \left(\prod_{i=1}^t \p_i^{n_i}\right) M &\text{since $R$ is a Dedekind domain}.
 \end{align*}
 Thus, we have \begin{align*}
x &\in \left(\left(\prod_{i=1}^t \p_i^{n_i}\right) M :_M \left(\prod_{i=1}^{t-1} \p_i^{n_i}\right) \right)& & \\
&=  \left(\prod_{i=1}^t \p_i^{n_i} :_R \prod_{i=1}^{t-1} \p_i^{n_i}\right)M &\text{again by flatness}&\\
&=  \p_t^{n_t} M &\text{again since $R$ is Dedekind}.& \qedhere
\end{align*}
\end{proof}

\noindent \textbf{Claim 2}: For any $t\geq 0$, we have $Q_t \nsubseteq \p_t M$.

\begin{proof}[Proof of Claim 2]
Let $W$ be the complement of the set $\bigcup_{i=1}^t \p_i$ in $R$.  Write $A = W^{-1}R$, $B = W^{-1}S$, and $N = W^{-1}M$. It is enough to show that $W^{-1}Q_t$ is not contained in $\p_t N$. Since localization commutes with colon and finite products, we may assume (by replacing $R$ with $A$) that $R$ is a \emph{semilocal} Dedekind domain, hence a PID.  After which, we have that each $\p_i$ is principal, say with generator $p_i$.  Thus,  $f=p_1^{n_1} \cdots p_t^{n_t}g$ for some $g\in M$.  Therefore, $Q_t = (Sf :_M (\prod_{i=1}^t p_i^{n_i}))$, whence $g\in Q_t$. 
So if the claim is false, $g \in p_tM$, which implies that $f \in p_t^{n_t+1}M$, a contradiction to our construction of $n_t$. This finishes the proof of the claim.
\end{proof}

Since $Q_{t-1} \subsetneq Q_t$ for all $t\geq 1$, we have a strict ascending chain of $S$-submodules of $M$, contradicting the fact that $M$ is Noetherian as an $S$-module.

Finally, to prove (\ref{it:finorder}), let $\p_1, \ldots, \p_k$ be the elements of $L_f \cap \Spec R$.  By the second paragraph of the proof, for each $1\leq i \leq k$ there is some positive integer $n_i$ with $f \notin \p^{n_i}M$.   Set $n := \max\{n_i : 1 \leq i \leq k\}$.  Then for each $i$, we have $f \notin \p_i^n M$.  But for any other maximal ideal $\m$ of $R$, we have $f \notin \m M$, whence $f \notin \m^n M$, completing the proof of (\ref{it:finorder}) from Theorem~\ref{thm:DedOR}, and thus the proof that $M$ is an Ohm-Rush $R$-module.
\end{proof}

\begin{cor}
Let $R$ be a Dedekind domain.  Let $R \ra S$ be a Noetherian $R$-algebra such that for every maximal ideal $\m$ of $R$, $S_\m$ is a faithfully flat Ohm-Rush $R_\m$-algebra.  Then $S$ is a faithfully flat Ohm-Rush $R$-algebra.
\end{cor}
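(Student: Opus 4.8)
The plan is to deduce this immediately from Theorem~\ref{thm:DedORglobal} by taking $M := S$, regarded as a module over itself. Since $S$ is a Noetherian ring, $S$ is a Noetherian $S$-module, so the hypothesis ``$M$ is a Noetherian $S$-module'' of Theorem~\ref{thm:DedORglobal} is satisfied. For each maximal ideal $\m$ of $R$, the hypothesis of the present corollary says that $S_\m = M_\m$ is a faithfully flat Ohm-Rush $R_\m$-algebra; in particular $M_\m$ is a faithfully flat Ohm-Rush $R_\m$-module, which is exactly the local hypothesis needed to apply Theorem~\ref{thm:DedORglobal}.

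Applying that theorem then gives that $M = S$ is a faithfully flat Ohm-Rush $R$-module. Since by hypothesis $S$ is an $R$-algebra (not merely an $R$-module), and being an Ohm-Rush algebra means precisely being an Ohm-Rush module that happens to be an algebra (Definition~\ref{def:orc}), it follows that $S$ is a faithfully flat Ohm-Rush $R$-algebra, as claimed.

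I do not expect any genuine obstacle here: the entire content has already been packaged into Theorem~\ref{thm:DedORglobal}, and the only thing to check is the trivial observation that a Noetherian ring is a Noetherian module over itself, together with the bookkeeping that the word ``algebra'' in ``Ohm-Rush algebra'' imposes no condition beyond ``Ohm-Rush module.'' So the proof is essentially a one-line reduction.
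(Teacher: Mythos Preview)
Your proposal is correct and is exactly the paper's own argument: the paper's proof reads in its entirety ``Substitute $M=S$ in Theorem~\ref{thm:DedORglobal}.'' The only additional remarks you make (that a Noetherian ring is a Noetherian module over itself, and that ``Ohm-Rush algebra'' means ``Ohm-Rush module that is an algebra'') are the obvious unpackings implicit in that one line.
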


\begin{proof}
Substitute $M=S$ in Theorem~\ref{thm:DedORglobal}.
\end{proof}

One could also obtain a corollary to Theorem~\ref{thm:DedORglobal} by substituting $R=S$, but this is unnecessary since any flat finitely generated module is projective \cite[Corollary to Theorem 7.12]{Mats}, and any projective module is Ohm-Rush \cite[Corollary 1.4]{OhmRu-content}.

By a quite different proof, we next present a theorem that weakens the condition on $R$ (to being merely a 1-dimensional Noetherian domain) but strengthens the condition on $M$ and $S$ (requiring $S$ to also be a domain and $M=S$).

\begin{thm}\label{thm:dim1ORglobal}
Let $R$ be a 1-dimensional integral domain.  Let $R \ra S$ be a faithfully flat map, where $S$ is a Noetherian integral domain.  Assume that for each maximal ideal $\m$ of $R$, we have that $R_\m \ra S_\m$ is Ohm-Rush $($where in both cases, we are inverting the multiplicative set $R \setminus \m)$.  Then $R \ra S$ is Ohm-Rush.
\end{thm}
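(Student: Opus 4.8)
The plan is to reduce, via Lemma~\ref{lem:dim1content}, to a descending chain condition. Fix a nonzero $f\in S$ (the case $f=0$ is trivial). Since $S$ is a flat $R$-module, Lemma~\ref{lem:dim1content} gives $f\in c(f)S$ as soon as $L_f$ satisfies the descending chain condition, and establishing this for every $f$ is precisely the assertion that $R\to S$ is Ohm-Rush. I would first record that $R$ is automatically Noetherian: a faithfully flat extension is cyclically pure, so every ascending chain of ideals of $R$ is the contraction of its (stabilizing) extension to the Noetherian ring $S$, hence stabilizes. Thus $R$ is a $1$-dimensional Noetherian domain, and each $R_\m$ is a $1$-dimensional Noetherian local domain, so Lemma~\ref{lem:dim1content} is available both globally and locally.

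The crux is the finiteness statement: the set $L_f\cap\Max R$ of maximal ideals $\m$ of $R$ with $f\in\m S$ is finite. To prove it, fix such an $\m$. Faithful flatness gives $\m S\neq S$, so $\m S$ has a nonempty finite set of minimal primes; pick one, say $P$, so $P\cap R=\m$. Because $R\to S$ is flat, the dimension formula for flat local homomorphisms applied to $R_\m\to S_P$ gives $\hgt_S P=\hgt_R\m+\dim(S_P/\m S_P)=1+0=1$, using $\dim R=1$ and that $P$, being minimal over $\m S$, cuts out an Artinian fiber ring. On the other hand $f\in\m S\subseteq P$, so $P$ contains one of the finitely many minimal primes $P_1,\dots,P_n$ of the nonzero principal ideal $fS$; since $S$ is a domain, Krull's principal ideal theorem gives $\hgt_S P_i=1$. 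A height-one prime contained in a height-one prime is equal to it, so $P=P_i$ and hence $\m=P\cap R=P_i\cap R$. Therefore $L_f\cap\Max R\subseteq\{P_i\cap R:1\le i\le n\}$ is finite.

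Granting this, write $L_f\cap\Max R=\{\m_1,\dots,\m_r\}$ and let $I_0\supseteq I_1\supseteq\cdots$ be any descending chain in $L_f$. Each $I_j$ is nonzero, and no maximal ideal $\p\notin\{\m_1,\dots,\m_r\}$ can contain any $I_j$ (otherwise $f\in I_jS\subseteq\p S$ would force $\p\in L_f$), so $I_jR_\p=R_\p$ for all such $\p$ and all $j$. For each $i$, the localized chain $\{I_jR_{\m_i}\}_j$ consists of ideals $J$ of $R_{\m_i}$ with $f/1\in JS_{\m_i}$ (here $f/1\neq 0$ because $S$ is a domain), i.e.\ it lies in the analogue of $L_f$ for the flat local homomorphism $R_{\m_i}\to S_{\m_i}$, which is Ohm-Rush by hypothesis; so Lemma~\ref{lem:dim1content} says that set has the descending chain condition, whence $\{I_jR_{\m_i}\}_j$ stabilizes, say for $j\ge N_i$. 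Setting $N:=\max_i N_i$, for every $j\ge N$ the ideals $I_j$ and $I_N$ have the same localization at every maximal ideal of $R$, hence $I_j=I_N$. Thus $L_f$ satisfies the descending chain condition, and the proof finishes by Lemma~\ref{lem:dim1content}.

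I expect the finiteness claim $|L_f\cap\Max R|<\infty$ to be the only real obstacle; the rest is routine localization bookkeeping. The delicate point in that claim is the height computation, which is exactly where flatness, the Noetherian hypothesis on $S$, and $\dim R=1$ conspire to force the minimal primes of $\m S$ to coincide with minimal primes of $fS$ — a collapse that genuinely fails without Noetherianity of $S$, consistent with Example~\ref{ex:X/p}.
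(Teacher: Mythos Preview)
Your proof is correct. The finiteness argument for $L_f\cap\Max R$ is essentially identical to the paper's: both pick a prime $P$ minimal over $\m S$, compute $\hgt P=1$ via the flat dimension formula, and match $P$ with one of the finitely many minimal primes of $fS$.

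Where you diverge is after that point. The paper does not localize at the $\m_i$ individually; instead it semi-localizes at $W=R\setminus\bigcup_i\m_i$, uses Ohm-Rush of $R_X\to S_X$ to produce a single nonzero ideal $J=c_{R_X}(f/1)\cap R$ lying below every $I\in L_f$ (which requires a separate associated-primes argument to show $IR_X\cap R=I$), and then invokes Artinianness of $R/J$. Because the hypothesis is only Ohm-Rush at \emph{single} maximal ideals, the paper must then prove an auxiliary Proposition that local Ohm-Rush implies semilocal Ohm-Rush over a $1$-dimensional Noetherian domain. Your route sidesteps both the semilocal reduction and that auxiliary Proposition: once $L_f\cap\Max R$ is finite, you apply Lemma~\ref{lem:dim1content} at each $\m_i$ separately to stabilize the chain locally, note it is already the unit ideal at all other maximal primes, and conclude by the local--global principle for ideals. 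This is shorter and more elementary; the paper's approach, by contrast, produces an explicit global lower bound $J$ for $L_f$, which is conceptually a bit stronger but not needed for the bare Ohm-Rush conclusion.
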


\begin{proof}
First we prove the result under the apparently stronger assumption that for any finite set $X=\{\m_1, \ldots, \m_t\}$ of maximal ideals of $R$, we have that the map $W^{-1}R \ra W^{-1}S$ is Ohm-Rush, where $W := R \setminus \bigcup_{i=1}^t \m_i$.  We introduce the notation $R_X := W^{-1}R$ and $S_X := W^{-1}S$ for this.

Accordingly, let $0\neq f \in S$. If $c(f) = (1)$, then $f \in c(f) S$. Otherwise let $P_1,P_2, \ldots P_n$ be the primes of $S$ minimal over $fS$. Let $X := \{\p_1, \p_2, \ldots, \p_n\}$, where $\p_i = P_i\cap R$. Claim: $L_f\cap \Spec R \subseteq X$.  To see this let $\p \in L_f\cap \Spec R$.  Note that $0R \notin L_f$, since $f \neq 0$.  Hence $\hgt \p = 1$. Let $P \in \Spec S$ be minimal over $\p S$.  Then since $S$ is Noetherian and faithfully flat over $R$, $R$ is also Noetherian by \cite[Exercise 7.9]{Mats}, and so $\hgt P = 1$ by~\cite[Theorem 15.1]{Mats}.   But $fS$ is not in any height zero prime of $S$ (since, again, $f\neq 0$ and $S$ is a domain), so $P$ is minimal over $fS$.  Thus, $P=P_i$ for some $i$, whence $\p = P \cap R$ (by faithful flatness) $= P_i \cap R = \p_i \in X$.

Now let $I \in L_f$.  Then $IR_X \in L_{f/1}$ where $f/1$ is the image of $f$ in $S_X$. By our assumption $R_X \ra S_X$ is Ohm-Rush, whence $f/1 \in c(f/1)S_X$, so that in particular $c(f/1) \neq 0$ since $f/1 \neq 0$ by torsion-freeness of $S$ over $R$.  Let $J := c(f/1) \cap R$, which is thus also nonzero.  We have $c(f/1) \subseteq IR_X$ by definition.  But also we have $IR_X \cap R = I$.  To see this, first note that $V(I) \subseteq L_f \cap \Spec R \subseteq X$ by the previous paragraph.  In particular, all the associated primes of $I$ are in $X$.  So let $a\in R \setminus I$.  Then $(I : a) \subseteq \p$ for some associated prime $\p$ of $R/I$ (since $R$ is Noetherian), hence for some $\p \in X$.  Thus, $a/1 \notin IR_\p$.  Since $I R_\p \supseteq IR_X$, we conclude that $a \notin IR_X \cap R$. Thus, $J = c(f/1) \cap R \subseteq IR_X \cap R \subseteq I$.  But since $I \in L_f$ was arbitrary, we have that $L_f$ is in one-to-one correspondence with some set of ideals in the Artinian ring $R/J$.  Hence $L_f$ satisfies the descending chain condition.  Thus by Lemma~\ref{lem:dim1content}, $f \in c(f)S$.

To complete the proof, all we need is the following result.
\end{proof}

\begin{prop}
  Let $R$ be a Noetherian, 1-dimensional, semilocal domain. Let $S$ be a Noetherian $R$-algebra. If $S_\m$ is Ohm-Rush over $R_\m$ for all maximal ideals $\m$ of $R$, then $S$ is Ohm-Rush over $R$.
\end{prop}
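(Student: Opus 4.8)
The plan is to lift the descending chain condition on the families $L_f$ from the localizations $R_{\m_i}$ to $R$ itself, and then to quote Lemma~\ref{lem:dim1content}. I will use throughout that $S$ is flat over $R$; this is what makes Lemma~\ref{lem:dim1content} available both locally and globally, and it is satisfied in the situation where the Proposition is applied, since in Theorem~\ref{thm:dim1ORglobal} the map $R\ra S$ is faithfully flat.

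Write $\m_1,\dots,\m_n$ for the finitely many maximal ideals of $R$; each $R_{\m_i}$ is then a one-dimensional Noetherian local domain, and by flatness $S_{\m_i}$ is a flat Ohm-Rush $R_{\m_i}$-module. By Lemma~\ref{lem:dim1content}, for every nonzero $g\in S_{\m_i}$ the set $L_g$ computed over $R_{\m_i}$ satisfies DCC. Now fix a nonzero $f\in S$; since $S$ is torsion-free over the domain $R$, the image $f/1$ is nonzero in each $S_{\m_i}$. I would then consider the map $\Phi\colon L_f\ra\prod_{i=1}^n L_i$, where $L_i$ denotes the analogue of $L_{f/1}$ over $R_{\m_i}$, sending an ideal $I$ of $R$ to $(IR_{\m_i})_{i=1}^n$. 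It is well defined because $f\in IS$ forces $f/1\in(IR_{\m_i})S_{\m_i}$, and it is an order-embedding because an ideal of the semilocal ring $R$ is determined by its localizations at the $\m_i$. Since a finite product of posets satisfying DCC again satisfies DCC, and any subposet of such does too, $L_f$ satisfies DCC. Applying Lemma~\ref{lem:dim1content} once more --- now over the one-dimensional Noetherian domain $R$, to the flat module $S$ --- gives $f\in c(f)S$. As $f=0$ is trivial, $S$ is Ohm-Rush over $R$.

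I do not anticipate a real obstacle here: the only delicate point is that $f$ must remain nonzero after localizing at each $\m_i$, which is exactly what prevents some $c_{R_{\m_i}}(f/1)$ from being zero and keeps Lemma~\ref{lem:dim1content} applicable --- and which fails precisely when $S$ has $\m_i$-torsion (e.g. when some $S_{\m_i}=0$), a situation excluded once $R\ra S$ is faithfully flat. The remaining steps --- local detection of ideals over a semilocal ring and stability of DCC under finite products and subposets --- are routine. As an alternative to the DCC argument, one can write down $c_R(f)=\bigcap_{i=1}^n\big(c_{R_{\m_i}}(f/1)\cap R\big)$ directly and check, by localizing at each $\m_i$, that this ideal belongs to $L_f$ and is contained in every member of $L_f$.
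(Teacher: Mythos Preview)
Your approach is correct and essentially the same as the paper's: the paper defines $J := \bigcap_{i=1}^n (c_i(f/1) \cap R)$, shows it is nonzero and contained in every member of $L_f$ (so $L_f$ sits inside the Artinian lattice of ideals of $R/J$ and hence has DCC), and then invokes Lemma~\ref{lem:dim1content} --- exactly your stated alternative, while your order-embedding $\Phi$ into $\prod_i L_{f/1}^{(i)}$ is a clean repackaging of the same idea. Your explicit flatness caveat is apt: the paper's statement omits it, but its proof uses it implicitly (both to guarantee $f/1 \neq 0$ in each $S_{\m_i}$ and to apply Lemma~\ref{lem:dim1content}), relying on the ambient faithful flatness in Theorem~\ref{thm:dim1ORglobal}.
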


\begin{proof}
Let $0 \neq f \in S$ with $c(f) \neq (1)$. Let $I \in L_f$. Let $\m_1,\ldots, \m_n$ be the set of maximal ideals of $R$. For each $1\leq i \leq n$, let $c_i$ be the content function associated to $R_{\m_i} \ra S_{\m_i}$.  Since $f \in IS$, we have $f/1 \in IS_{\m_i} = (IR_{\m_i})S_{\m_i}$, whence $IR_{\m_i} \in L_{f/1}$ for each $i = 1, \ldots, n$. Thus $0 \neq c_i(f/1) \subseteq IR_{\m_i}$ for each $i$. Hence $c_i(f/1) \cap R$ is not zero and is contained in $IR_{\m_i}\cap R$. Furthermore $I = \bigcap_i (IR_{\m_i} \cap R)$. Hence $\bigcap_i (c_i(f/1) \cap R) \subseteq I$. 

Now let $J=\bigcap_i (c_i(f/1) \cap R)$.  Since $J$ is a finite intersection of nonzero ideals in the integral domain $R$, we must have that $J \neq 0$.  But $J$ is contained in every element of $L_f$.  Since $R/J$ is artinian, $L_f$ thus satisfies the descending chain condition. So $f \in c(f)S$ by Lemma~\ref{lem:dim1content}.
\end{proof}

It is reasonable to ask whether the Noetherian condition on the $S$-module $M$ in Theorem~\ref{thm:DedORglobal} and on the ring $S$ in Theorem~\ref{thm:dim1ORglobal} can be dropped.  The following example shows that these conditions are necessary.

\begin{example}\label{ex:X/p}
At the beginning of \cite{EakSi-almost}, they give the following example.  Let $R=\mathbb Z$, let $x$ be an indeterminate, and let $S$ be the $\mathbb{Z}$-subalgebra of $\mathbb{Q}[x]$ generated over $\mathbb Z$ by the elements $x/p$, taken over all positive prime numbers $p$.  As they note, this ring is locally a polynomial ring over $\mathbb{Z}$, for if we take any particular prime number $p$, localizing $\mathbb{Z} \ra S$ at the multiplicative set $\mathbb{Z} \setminus p\mathbb{Z}$ yields the ring map $\mathbb{Z}_{(p)} \ra \Z_{(p)}[x/p]$.  Therefore, $S_{p\Z} \cong \Z_{(p)}[x/p]$ satisfies the much weaker condition of being an Ohm-Rush $\Z_{(p)}$-algebra.  However, $\Z\ra S$ is not Ohm-Rush, since for any prime number $p$, we have $x =p(x/p) \in pS$, whence $c(x) \subseteq \bigcap_p p\mathbb Z = 0$, even though $x\neq 0$, so that $x \notin c(x)S$.
\end{example}

\begin{example}\label{ex:notfg}
From the same article, we can recover a Gaussian algebra that is locally polynomial but not finitely generated.  Namely, in \cite[Example 3.15]{EakSi-almost}, they construct a $\mathbb Z$-algebra $S$ that is a Noetherian UFD that is not finitely generated over $\mathbb Z$, such that for every positive prime number $p$, we have $(\mathbb Z \setminus p\mathbb Z)^{-1}S \cong \mathbb{Z}_{(p)}[x]$.  Then either Theorem~\ref{thm:dim1ORglobal} or Theorem~\ref{thm:DedORglobal} is enough to show that $S$ is an Ohm-Rush $R$-algebra.  After this, an appeal to the fact that the Gaussian property globalizes \cite[Proposition 3.3]{nmeSh-OR2} proves that $S$ is a Gaussian $\mathbb{Z}$-algebra, since a polynomial extension in one variable of the Pr\"ufer domain $\mathbb{Z}_{(p)}$ is always Gaussian by Gauss's Lemma.
\end{example}

\section{Power-content algebras}\label{sec:pc}
  Rush in 
\cite{Ru-content} defined  a {\it weak content} algebra over $R$ as an Ohm-Rush algebra $S$ such that $\sqrt{c(fg)} = \sqrt{c(f)c(g)}$ for all $f,g\in S$.  As indicated by the terminology, any content algebra is a weak content algebra.  In this final section, we explore a property strictly between Ohm-Rush and weak content algebra.

First, we recall the notion of the content of an \emph{ideal}:

\begin{defn}[{\cite[just prior to Lemma 3.8]{nmeSh-OR}}]
Let $R \ra S$ be an Ohm-Rush algebra and $J$ an ideal of $S$.  Then $c(J) := \bigcap \{ I \subseteq R \text{ ideal} : J \subseteq IS\}$.  Equivalently, $c(J) = \sum_{g\in S} c(g)$.  Hence, $J \subseteq c(J)S$.
\end{defn}

\begin{defn}
Let $R\ra S$ be an Ohm-Rush algebra.  We say it is a \emph{power-content} algebra if for   any ideal $J$ of $S$, we have $c(\sqrt{J}) \subseteq \sqrt{c(J)}$.
\end{defn}

\begin{lemma}\label{lem:PCArad}
Let $R \ra S$ be an Ohm-Rush algebra.  Then it is a power-content algebra if and only if for any radical ideal $I$ of $R$, we have that $IS$ is a radical ideal of $S$.
\end{lemma}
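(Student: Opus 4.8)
The plan is to prove both implications directly from the definitions, using the two key identities available for the content of an ideal: $c(J)=\sum_{g\in J}c(g)$, and the characterization of power-content via $c(\sqrt J)\subseteq\sqrt{c(J)}$. For the forward direction, suppose $R\ra S$ is a power-content algebra and let $I$ be a radical ideal of $R$. I want to show $IS$ is radical, i.e. $\sqrt{IS}\subseteq IS$. The natural move is to apply the power-content inequality to the ideal $J:=IS$. First I would observe that $c(IS)\subseteq I$: indeed $I$ itself is an ideal of $R$ with $IS\subseteq IS$, so $I\in L_{IS}$ in the obvious sense, and $c(IS)$ is the intersection of all such, hence $c(IS)\subseteq I$. (Conversely $c(IS)\supseteq I$ by faithful flatness/cyclic purity, though I only need the one containment.) Therefore $\sqrt{c(IS)}\subseteq\sqrt I=I$ since $I$ is radical. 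Now power-content gives $c(\sqrt{IS})\subseteq\sqrt{c(IS)}\subseteq I$, and since every ideal is contained in the extension of its content, $\sqrt{IS}\subseteq c(\sqrt{IS})S\subseteq IS$. Hence $\sqrt{IS}=IS$, so $IS$ is radical.

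For the converse, assume that $IS$ is a radical ideal of $S$ for every radical ideal $I$ of $R$, and let $J$ be an arbitrary ideal of $S$; I must show $c(\sqrt J)\subseteq\sqrt{c(J)}$. Set $I:=\sqrt{c(J)}$, a radical ideal of $R$. By hypothesis $IS$ is radical in $S$. I claim $J\subseteq IS$: since $J\subseteq c(J)S\subseteq\sqrt{c(J)}S=IS$, this is immediate. Because $IS$ is radical and contains $J$, it contains $\sqrt J$ as well. Now apply the content function to the containment $\sqrt J\subseteq IS$: monotonicity of $c$ on ideals (which follows since $L_{\sqrt J}\supseteq L_{IS}$, or equivalently from $c(-)=\sum_{g\in-}c(g)$) gives $c(\sqrt J)\subseteq c(IS)\subseteq I=\sqrt{c(J)}$, where $c(IS)\subseteq I$ is the containment already noted above. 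This is exactly what was to be shown.

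The only mild subtleties are the two bookkeeping facts used repeatedly: that $c(IS)\subseteq I$ for any ideal $I$ of $R$, and that $c$ is monotone on ideals of $S$; both are immediate from the definition $c(J)=\bigcap\{I: J\subseteq IS\}$ (equivalently $c(J)=\sum_{g\in J}c(g)$) recalled just before the lemma. There is no serious obstacle here — the argument is essentially a formal manipulation of the Galois-type correspondence between ideals of $R$ and extended/contracted ideals of $S$ — so the main point is simply to organize the two directions cleanly and make sure the reduction "$J\subseteq\sqrt{c(J)}S$ and the latter is radical" is spelled out, since that is where the radical-ideal hypothesis on extensions gets used.
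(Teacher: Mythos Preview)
Your proof is correct and follows essentially the same approach as the paper. The only difference is cosmetic: in the forward direction the paper works element-wise (taking $f\in\sqrt{IS}$, applying the power-content inequality to the principal ideal $(f^n)$, and concluding $c(f)\subseteq I$), whereas you apply the inequality once to the ideal $J=IS$ and use $\sqrt{IS}\subseteq c(\sqrt{IS})S$; the converse direction is identical to the paper's.
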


\begin{proof}
Suppose we have a power-content algebra, and let $I$ be a radical ideal of $R$.  Let $f \in \sqrt{IS}$.  That is, there is some $n$ with $f^n \in IS$.  Then $c(f^n) \subseteq I$.  Since $f \in \sqrt{(f^n)}$, we have $c(f) \subseteq c(\sqrt{(f^n)}) \subseteq \sqrt{c(f^n)} \subseteq \sqrt{I} = I$.  Thus, $f\in IS$, whence $IS$ is radical.

Conversely, suppose radical ideals extend to radical ideals.  Let $J$ be an ideal of $S$.  Then $\sqrt{c(J)}S$ is a radical ideal of $S$, and $J \subseteq c(J)S \subseteq \sqrt{c(J)}S$, whence we have $\sqrt{J} \subseteq \sqrt{c(J)}S$.  Thus, $c(\sqrt{J}) \subseteq \sqrt{c(J)}$.
\end{proof}

We immediately see two distinctions among content-defined classes of $R$-algebras.

\begin{example}
Not all faithfully flat Ohm-Rush algebras are power-content.  For instance, consider the ring homomorphism $R \ra R[x]/(x^2)=:S$, where $R$ is any commutative ring and $x$ is an indeterminate over $R$.  Then it is faithfully flat and Ohm-Rush, since $S$ is a free $R$-module of rank 2.  But it is not power-content, since $\sqrt{c(0)} = \sqrt{0}$, but $c(\sqrt{0}) \supseteq c(x) = R$, whereas the nilradical of a ring is always a proper ideal. 
\end{example}

\begin{example}
Not all faithfully flat power-content algebras are weak content algebras.  For instance, let $R$ be any commutative ring, let $x,y$ be any indeterminates over $R$, and consider the algebra $R \ra R[x,y]/(xy) \cong S$.  Again it is a faithfully flat Ohm-Rush algebra because it is free as an $R$-module.  Let $\p$ be a prime ideal of $R$.  Then $xy=0 \in \p S$, whereas $x\notin \p S$  and $y \notin \p S$.  Hence this is not a weak content algebra.  But if $I$ is a radical ideal of $R$, then $S/IS \cong (R/I)[x,y]/(xy)$ is reduced, as is easy to show.  Hence this is a power-content algebra.
\end{example}

\begin{prop}
Let $R \ra S$ be a ring homomorphism.  Then it is a weak content algebra if and only if it is a power-content algebra such that for all ideals $I,J$ of $S$, we have $c(I) \cap c(J) \subseteq \sqrt{c(I \cap J)}$.
\end{prop}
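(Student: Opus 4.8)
The plan is to prove both implications separately, using Lemma~\ref{lem:PCArad} to handle the power-content half and a direct radical computation for the other condition. Throughout, recall that $\sqrt{c(f)c(g)} = \sqrt{c(fg)}$ is the defining property of a weak content algebra, and that $c(fg) \subseteq c(f)c(g)$ always holds in an Ohm-Rush algebra.

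First I would prove the forward direction. Assume $R \ra S$ is a weak content algebra; in particular it is Ohm-Rush. To see it is power-content, I invoke Lemma~\ref{lem:PCArad}: I take a radical ideal $I$ of $R$ and show $IS$ is radical. If $f \in \sqrt{IS}$, then $f^n \in IS$ for some $n$, so $c(f^n) \subseteq I$. Now apply the weak content identity with $f=g=\cdots$: iterating $\sqrt{c(f)^n} = \sqrt{c(f^n)}$ (a straightforward induction on the weak content relation), we get $c(f) \subseteq \sqrt{c(f)^n} = \sqrt{c(f^n)} \subseteq \sqrt{I} = I$, so $f \in IS$. Hence $IS$ is radical and the algebra is power-content. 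For the second condition, let $I, J$ be ideals of $S$. Since $c(I) = \sum_{f \in I} c(f)$ and $c(J) = \sum_{g \in J} c(g)$, and since radicals of sums behave well, it suffices (after taking radicals) to show $c(f) \cap c(g) \subseteq \sqrt{c(I \cap J)}$ whenever $f \in I$, $g \in J$; but actually the cleanest route is: for $f \in I$ and $g \in J$ we have $fg \in I \cap J$, so $c(fg) \subseteq c(I \cap J)$, and by the weak content property $c(f)c(g) \subseteq \sqrt{c(f)c(g)} = \sqrt{c(fg)} \subseteq \sqrt{c(I \cap J)}$; then $c(f) \cap c(g) \subseteq c(f)c(g) \subseteq \sqrt{c(I\cap J)}$, and summing/taking radicals over all such $f,g$ gives $c(I) \cap c(J) \subseteq \sqrt{c(I) c(J)} \subseteq \sqrt{c(I\cap J)}$. (One must be slightly careful that $c(I)\cap c(J)$ is controlled by the pairwise products, which follows since $c(I)c(J) = \sum_{f\in I, g\in J} c(f)c(g)$ and $c(I)\cap c(J) \subseteq c(I)c(J)$ up to radical is not automatic — so I would instead argue directly that any element of $c(I) \cap c(J)$, being in both sums, has a power in $\sum c(f)c(g) \subseteq \sqrt{c(I\cap J)}$.)

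Next I would prove the converse. Assume $R \ra S$ is a power-content algebra satisfying $c(I) \cap c(J) \subseteq \sqrt{c(I \cap J)}$ for all ideals $I, J$ of $S$. I must show $\sqrt{c(f)c(g)} = \sqrt{c(fg)}$ for all $f, g \in S$. The inclusion $\sqrt{c(fg)} \subseteq \sqrt{c(f)c(g)}$ is immediate from $c(fg) \subseteq c(f)c(g)$. For the reverse, apply the hypothesis with $I = (f)$ and $J = (g)$: we get $c((f)) \cap c((g)) \subseteq \sqrt{c((f) \cap (g))}$. Now $c((f)) = c(f)$ (every element of $(f)S$ lies in $c(f)S$, and conversely), similarly $c((g)) = c(g)$; and $(f) \cap (g) \subseteq (fg)$ is false in general — rather $(fg) \subseteq (f)\cap(g)$, giving $c((f)\cap(g)) \supseteq c(fg)$, the wrong direction. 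So the key move must use power-content: since $fg \in (f) \cap (g)$ one has $(fg) \subseteq (f)\cap (g) \subseteq \sqrt{(fg)}$ need not hold, but $\sqrt{(f)\cap(g)} = \sqrt{(fg)}$ as ideals of $S$ (a radical computation in $S$: $\sqrt{(f)} \cap \sqrt{(g)} = \sqrt{(fg)}$). Hence $c((f)\cap(g)) \subseteq c(\sqrt{(f)\cap(g)}) = c(\sqrt{(fg)}) \subseteq \sqrt{c((fg))} = \sqrt{c(fg)}$, where the crucial inclusion $c(\sqrt{J}) \subseteq \sqrt{c(J)}$ is exactly the power-content property. Combining, $c(f) \cap c(g) \subseteq \sqrt{c(fg)}$. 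Finally I upgrade this to the product: I would show $\sqrt{c(f)c(g)} \subseteq \sqrt{c(f) \cap c(g)}$... which is again the wrong direction, so instead I apply the already-proven inclusion to well-chosen auxiliary ideals, or note that power-content plus the intersection hypothesis forces $\sqrt{c(f)c(g)} = \sqrt{c(f)\cap c(g)}$ since $c(f)c(g) \subseteq c(f)\cap c(g)$ always and... here I need $\sqrt{c(f)\cap c(g)} \subseteq \sqrt{c(f)c(g)}$, which holds in any ring ($\sqrt{AB} = \sqrt{A\cap B} = \sqrt A \cap \sqrt B$). So $\sqrt{c(f)c(g)} = \sqrt{c(f)\cap c(g)} \subseteq \sqrt{\sqrt{c(fg)}} = \sqrt{c(fg)}$, completing the converse.

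The main obstacle will be bookkeeping the direction of all these radical inclusions and, in the forward direction, correctly reducing the statement about $c(I) \cap c(J)$ (which involves infinite sums of contents) to the pairwise statement about $c(f) \cap c(g)$; the cleanest fix is to work elementwise: an element of $c(I)\cap c(J)$ is a finite sum of elements of various $c(f_i)$ and simultaneously of various $c(g_j)$, so a suitable power lands in $\sum_{i,j} c(f_i)c(g_j) \subseteq \sum_{i,j}\sqrt{c(f_ig_j)} \subseteq \sqrt{c(I\cap J)}$ since each $f_i g_j \in I \cap J$. I would double-check that the identity $\sqrt{AB} = \sqrt{A} \cap \sqrt{B}$ for ideals, and $c((f)) = c(f)$, are either standard or already recorded in the earlier papers, citing \cite{nmeSh-OR} if needed.
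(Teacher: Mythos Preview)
Your converse direction is essentially the paper's proof: the chain
\[
c(f)c(g) \subseteq c(f) \cap c(g) \subseteq \sqrt{c\bigl((f)\cap(g)\bigr)} \subseteq \sqrt{c\bigl(\sqrt{(fg)}\bigr)} \subseteq \sqrt{\sqrt{c(fg)}} = \sqrt{c(fg)}
\]
is exactly what the paper writes, and your meandering eventually lands on precisely this. The identifications $c((f)) = c(f)$ and $\sqrt{(f)\cap(g)} = \sqrt{(fg)}$ that you flag are unproblematic.

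Your forward direction, however, takes a genuinely different route from the paper. The paper exploits Rush's characterization that in a weak content algebra every prime of $R$ extends to a prime or to the unit ideal in $S$. From this, (i) a radical ideal $I = \bigcap_{\p} \p$ extends to an intersection of primes (using Ohm-Rush to pass the intersection through), hence $IS$ is radical; and (ii) if $\p \supseteq c(I\cap J)$ then $I\cap J \subseteq \p S$ forces $I \subseteq \p S$ or $J \subseteq \p S$ by primality, hence $c(I)\cap c(J) \subseteq \p$. Both halves fall out in one line each.

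By contrast, you argue computationally: for power-content you iterate the weak content identity to get $\sqrt{c(f^n)} = \sqrt{c(f)}$, and for the intersection condition you use $fg \in I\cap J$ to get $c(f)c(g) \subseteq \sqrt{c(fg)} \subseteq \sqrt{c(I\cap J)}$, then sum and take radicals. This works, but be careful: you write ``$c(f)\cap c(g) \subseteq c(f)c(g)$,'' which is backwards. The correct salvage (which you essentially arrive at in your final paragraph) is
\[
c(I)\cap c(J) \subseteq \sqrt{c(I)\cap c(J)} = \sqrt{c(I)c(J)} = \sqrt{\textstyle\sum_{f\in I,\,g\in J} c(f)c(g)} \subseteq \sqrt{c(I\cap J)},
\]
using only the standard identity $\sqrt{A\cap B} = \sqrt{AB}$. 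Your approach is more self-contained (it does not invoke Rush's theorem on extension of primes), while the paper's approach is cleaner and makes transparent \emph{why} the intersection condition is the right supplement to power-content: it is exactly what recovers primality of extended primes.
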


\begin{proof}
First suppose it is a weak content algebra.  
Let $I$ be a radical ideal of $R$.  Say $I = \bigcap_{\p \in U} \p$ for some subset $U \subseteq \Spec R$.  Then $IS = (\bigcap_{\p \in U} \p)S = \bigcap_{\p \in U} (\p S)$ by the Ohm-Rush property.  But since $R \ra S$ is a weak content algebra, each $\p S$ is either prime or the unit ideal, so their intersection must be a radical ideal.  Hence by Lemma~\ref{lem:PCArad}, it is a power-content algebra.  Now let $I, J$ be ideals of $S$.  Let $\p$ be a prime ideal of $R$ that contains $c(I \cap J)$.  Then $I \cap J \subseteq \p S$, so that since $\p S$ is either the unit ideal or prime, either $I \subseteq \p S$ or $J \subseteq \p S$.  Thus, either $c(I) \subseteq \p$ or $c(J) \subseteq \p$, and in either case we have $c(I) \cap c(J) \subseteq \p$.  We have shown that any prime ideal that contains $c(I \cap J)$ contains $c(I) \cap c(J)$, whence $c(I) \cap c(J) \subseteq \sqrt{c(I \cap J)}$.

Conversely, suppose $R \ra S$ is a power-content algebra such that $c(I) \cap c(J) \subseteq \sqrt{c(I\cap J)}$ for all ideals $I,J$ of $S$.  Let $f,g\in S$.  Then \begin{align*}
c(f)c(g) &\subseteq c(f) \cap c(g) \subseteq \sqrt{c((f) \cap (g))} \subseteq \sqrt{c\left(\sqrt{(f) \cap (g)}\right)}\\
&= \sqrt{c\left(\sqrt{(fg)}\right)} \subseteq \sqrt{\sqrt{c(fg)}} = \sqrt{c(fg)}. \qedhere
\end{align*}
\end{proof}

Finally we show that the power-content property is transitive and, in the presence of the Ohm-Rush property, globalizes.  These results are analogous to our results of this type for Gaussian, weak content, and semicontent algebras \cite[Propositions 3.1-3.3]{nmeSh-OR2}.

\begin{thm}
Let $\phi: R \ra S$ be a flat Ohm-Rush algebra.  The following are equivalent: \begin{enumerate}
    \item[(a)] $R \ra S$ is a power-content algebra.
    \item[(b)] For every multiplicative subset $W$ of $R$, $W^{-1}R \ra W^{-1}S$ is a power-content algebra.
    \item[(c)] For every maximal ideal $\m$ of $R$, $R_\m \ra S_\m$ is a power-content algebra, where $S_\m$ is the localization of $S$ at the multiplicative set $\phi(R \setminus \m)$.
\end{enumerate}
\end{thm}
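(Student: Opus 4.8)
The plan is to push everything through Lemma~\ref{lem:PCArad}, which reformulates the power-content property for a flat Ohm-Rush algebra as: every radical ideal $I$ of the base extends to a radical ideal, equivalently $S/IS$ is reduced for every radical ideal $I$. The preliminary observation I would record is that the flat Ohm-Rush property is stable under localization of the base: if $R \ra S$ is flat Ohm-Rush and $W \subseteq R$ is multiplicative, then $W^{-1}R \ra W^{-1}S$ is flat Ohm-Rush with $c_{W^{-1}R}(f/w) = c_R(f)\,W^{-1}R$ for $f \in S$, $w \in W$. Indeed, if $f/w \in \mathfrak{a}\,W^{-1}S$ for an ideal $\mathfrak{a} = I\,W^{-1}R$ of $W^{-1}R$ (with $I = \mathfrak{a}\cap R$), then $w'f \in IS$ for some $w' \in W$, so $w'c_R(f) = c_R(w'f) \subseteq I$ by flatness, hence $c_R(f)\,W^{-1}R \subseteq \mathfrak{a}$; this exhibits $c_R(f)\,W^{-1}R$ as the least such ideal, and $f/w \in c_R(f)\,W^{-1}S$ then gives the Ohm-Rush conclusion. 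Consequently each of (a), (b), (c) is, via Lemma~\ref{lem:PCArad} applied to the appropriate flat Ohm-Rush algebra, a statement about radical ideals extending to radical ideals.

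For (a) $\Rightarrow$ (b): given a radical ideal $\mathfrak{a}$ of $W^{-1}R$, write $\mathfrak{a} = I\,W^{-1}R$ with $I = \mathfrak{a}\cap R$, which is radical in $R$ as a contraction of a radical ideal. Then $\mathfrak{a}(W^{-1}S) = (IS)(W^{-1}S) = W^{-1}(IS)$; since $IS$ is radical in $S$ by (a) and localization preserves radical ideals, $W^{-1}(IS)$ is radical in $W^{-1}S$, and Lemma~\ref{lem:PCArad} finishes it. The implication (b) $\Rightarrow$ (c) is just the case $W = R \setminus \m$.

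For (c) $\Rightarrow$ (a): let $I$ be a radical ideal of $R$; I want $S/IS$ reduced, and since reducedness can be checked at maximal ideals it suffices to show $(S/IS)_{\mathfrak{M}}$ is reduced for each maximal ideal $\mathfrak{M}$ of $S$. Set $\mathfrak{p} = \phi^{-1}(\mathfrak{M})$ and choose a maximal ideal $\m \supseteq \mathfrak{p}$ of $R$. Then $\phi(R \setminus \m) \subseteq \phi(R \setminus \mathfrak{p})$, and the latter misses $\mathfrak{M}$ (as $\mathfrak{M}$ contracts to $\mathfrak{p}$), so $(S/IS)_{\mathfrak{M}}$ is a localization of $S_\m / I S_\m = S_\m/(IR_\m)S_\m$, where $R_\m \ra S_\m$ is the localization at $\phi(R\setminus\m)$. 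Now $IR_\m$ is a radical ideal of $R_\m$, so by (c) together with Lemma~\ref{lem:PCArad} the ideal $(IR_\m)S_\m$ is radical in $S_\m$; hence $S_\m/I S_\m$ is reduced and so is its localization $(S/IS)_{\mathfrak{M}}$. Therefore $S/IS$ is reduced, and one last application of Lemma~\ref{lem:PCArad} shows $R \ra S$ is power-content.

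Everything past Lemma~\ref{lem:PCArad} is bookkeeping: I rely only on the standard facts that contractions and localizations of radical ideals are radical and that reducedness is detected at maximal ideals. I do not expect a genuine obstacle; the one substantive point — and the only place flatness is really used — is the localization stability of the flat Ohm-Rush property with $c_{W^{-1}R}(f/w) = c_R(f)\,W^{-1}R$, and the one point to watch is that in (c) $\Rightarrow$ (a) the localization must be taken at a maximal ideal \emph{of $S$} and then related back to the hypothesis on $R_\m \ra S_\m$ through the containment $\mathfrak{p} = \phi^{-1}(\mathfrak{M}) \subseteq \m$.
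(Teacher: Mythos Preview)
Your proof is correct and follows the same strategy as the paper: both reduce everything to Lemma~\ref{lem:PCArad}, handle (a)$\Rightarrow$(b) by contracting the radical ideal of $W^{-1}R$ to $R$ and then re-localizing, and prove (c)$\Rightarrow$(a) by a local--global argument. The only cosmetic difference is in (c)$\Rightarrow$(a): the paper localizes at maximal ideals $\m$ of $R$, obtains $f/1\in IS_\m$ for each $\m$, and concludes $f\in IS$ via the colon $(IS:_R f)$ not lying in any $\m$, whereas you localize at maximal ideals $\mathfrak{M}$ of $S$ and use that reducedness of $S/IS$ is detected locally---these are interchangeable standard gluing arguments.
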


\begin{proof}
First we prove that (a) $\implies$ (b).  Recall \cite[Theorem 3.1]{OhmRu-content} that $W^{-1}S$ is an Ohm-Rush $W^{-1}R$-algebra.  Now let $I$ be a radical ideal of $W^{-1}R$.  Let $J$ be the contraction of $I$ to $R$.  Then $J$ is also radical, whence $JS$ is radical by Lemma~\ref{lem:PCArad}, whence $W^{-1}(JS) = I(W^{-1}S)$ is a radical ideal.  Then by Lemma~\ref{lem:PCArad} again, $W^{-1}R \ra W^{-1}S$ is a power-content algebra.

Since it is obvious that (b) $\implies$ (c), it remains only to prove that (c) $\implies$ (a). Accordingly, let $I$ be a radical ideal of $R$.  Let $f\in S$ such that $f^n \in IS$.  Let $\m$ be a maximal ideal of $R$.  By assumption, $R_\m \ra S_\m$ is a power-content algebra.  Thus, $IS_\m = (I R_\m)S_\m$ is a radical ideal of $S_\m$ by Lemma~\ref{lem:PCArad}.  Since $(f/1)^n \in IS_\m$, it follows that $f/1 \in IS_\m$.  Thus $(IS :_R f) \nsubseteq \m$.  Since $\m$ was an arbitrary maximal ideal of $R$, we have $(IS :_R f) = R$, whence $f \in IS$.  Thus, $IS$ is radical, whence by Lemma~\ref{lem:PCArad}, $R \ra S$ is power-content.
\end{proof}

\begin{thm}
Let $R \ra S$ and $S \ra T$ be power-content algebras.  Then $R \ra T$ is a power-content algebra.  That is, the property is transitive.
\end{thm}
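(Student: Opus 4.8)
The plan is to lean entirely on the characterization in Lemma~\ref{lem:PCArad}: an Ohm-Rush algebra is a power-content algebra precisely when every radical ideal of the base extends to a radical ideal. Once that is available, the transitivity assertion is essentially a matter of extending a radical ideal twice; the one genuine prerequisite is that the composite $R \ra T$ be an Ohm-Rush algebra, since the power-content property is only defined for Ohm-Rush algebras.

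First I would settle that $R \ra T$ is Ohm-Rush. This can either be quoted from the transitivity of the Ohm-Rush property in the earlier papers of the series, or checked directly: given $f \in T$, set $\mathfrak b := c_{S\ra T}(f)$, an ideal of $S$. The Ohm-Rush hypothesis on $S \ra T$ gives $f \in \mathfrak b T$, and since $\mathfrak b \subseteq c_{R\ra S}(\mathfrak b)\,S$ (the content of the ideal $\mathfrak b$), we get $f \in c_{R\ra S}(\mathfrak b)\,(ST) = c_{R\ra S}(\mathfrak b)\,T$. Conversely, if $I$ is any ideal of $R$ with $f \in IT = (IS)T$, then $\mathfrak b \subseteq IS$ by definition of $\mathfrak b$, so $c_{R\ra S}(\mathfrak b) \subseteq c_{R\ra S}(IS) \subseteq I$; intersecting over all such $I$ shows $c_{R\ra S}(\mathfrak b) \subseteq c_{R\ra T}(f)$. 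Hence $f \in c_{R\ra T}(f)\,T$, so $R \ra T$ is Ohm-Rush (indeed $c_{R\ra T}(f) = c_{R\ra S}(c_{S\ra T}(f))$).

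With that in hand, the main step is immediate. Let $I$ be a radical ideal of $R$. Since $R \ra S$ is power-content, Lemma~\ref{lem:PCArad} gives that $IS$ is a radical ideal of $S$. Applying Lemma~\ref{lem:PCArad} again --- now to the radical ideal $IS$ of $S$ and the power-content algebra $S \ra T$ --- we find that $(IS)T$ is a radical ideal of $T$. But $(IS)T = I(ST) = IT$, so $IT$ is radical. Since $R \ra T$ is Ohm-Rush, a final application of Lemma~\ref{lem:PCArad} shows that $R \ra T$ is a power-content algebra.

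I expect the only nontrivial point to be the verification that the composite is Ohm-Rush; everything after that is the two-line ``extend twice'' computation above. If the transitivity of the Ohm-Rush property is already recorded in \cite{nmeSh-OR} or \cite{nmeSh-OR2}, I would simply cite it and keep the argument to a single short paragraph.
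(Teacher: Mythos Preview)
Your proposal is correct and matches the paper's proof essentially line for line: the paper cites \cite[1.2(ii)]{OhmRu-content} for the Ohm-Rush transitivity (which you both prove directly and anticipate citing), and then carries out exactly your ``extend a radical ideal twice via Lemma~\ref{lem:PCArad}'' computation.
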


\begin{proof}
We have that $R \ra T$ is Ohm-Rush by repeated use of \cite[1.2(ii)]{OhmRu-content}.  Now let $I$ be a radical ideal of $R$.  Then by Lemma~\ref{lem:PCArad}, $IS$ is a radical ideal of $S$, whence $IT = (IS)T$ is a radical ideal of $T$.  Hence by Lemma~\ref{lem:PCArad} again, $R \ra T$ is power-content.
\end{proof}

\section*{Acknowledgment}
We offer our thanks to the anonymous referee, whose careful reading and numerous comments improved the presentation of the paper.

\providecommand{\bysame}{\leavevmode\hbox to3em{\hrulefill}\thinspace}
\providecommand{\MR}{\relax\ifhmode\unskip\space\fi MR }
\providecommand{\MRhref}[2]{%
  \href{http://www.ams.org/mathscinet-getitem?mr=#1}{#2}
}
\providecommand{\href}[2]{#2}

\end{document}